\documentclass[a4paper,11pt]{amsart}

\usepackage[pctex32]{graphics}
\usepackage[latin1]{inputenc}
\usepackage{epsfig}
\usepackage[english]{babel}
\usepackage{graphicx,colortbl}
\usepackage{graphics}
\usepackage[dvips]{geometry}
\setlength{\textwidth}{6.1in}
\setlength{\evensidemargin}{-0.010in}
\usepackage{psfrag}
\usepackage{graphicx}
\usepackage[active]{srcltx}
\usepackage{amscd}
\usepackage{amsmath,amstext, amsthm}
\usepackage{amsfonts}
\usepackage{amssymb}
\usepackage{epsfig} 


\usepackage{graphicx,color}
\usepackage[colorlinks,linkcolor=green]{hyperref}

\newtheorem{theorem}{Theorem}[section]

\theoremstyle{definition}

\numberwithin{equation}{section}
\newcommand{\be}{\begin{equation}}
\newcommand{\ee}{\end{equation}}

\newcommand{\vv}{{\bf V}}

\newcommand{\ra}{\rangle}

\begin{document}

\title[Linearizability and  critical period bifurcations of  Riccati   system]
{Linearizability and  critical period bifurcations of
\\
 a generalized Riccati system}
\author[V.G. ROMANOVSKI,  W. FERNANDES,   Y. TANG, Y. TIAN]
{Valery  G. Romanovski$^{1,2,3,4}$, Wilker Fernandes$^{5}$,  Yilei Tang$^{6,4}$ and Yun Tian$^{1}$ }

\address{$^1$
Department of Mathematics, Shanghai Normal University, Shanghai, 200234, P.R. China}
\email{Valery.Romanovski@uni-mb.si (V.G. Romanovski), ytian22@shnu.edu.cn (Y. Tian)}

\address{$^2$ Faculty of Electrical Engineering and Computer Science, University of Maribor, Smetanova 17,  Maribor, SI-2000 Maribor, Slovenia}

\address{$^3$ Faculty of Natural Science and Mathematics, University of Maribor, Koro\v ska c.160, Maribor, SI-2000 Maribor, Slovenia}

\address{$^4$ Center for Applied Mathematics and Theoretical Physics, University of Maribor, Krekova 2, Maribor,  SI-2000 Maribor, Slovenia}

\address{$^5$ Instituto de Ci\^encias Matem\'aticas e de Computa\c{c}\~ao - USP, Avenida Trabalhador S\~ao-carlense, 400, 13566-590, S\~ao Carlos, Brazil}
\email{wilker.thiago@usp.br (W. Fernandes)}

\address{$^6$ School of Mathematical Science, Shanghai Jiao Tong University, Dongchuan Road 800, Shanghai, 200240, P.R. China}
\email{Corresponding author. mathtyl@sjtu.edu.cn (Y. Tang)}


\date{}

\begin{abstract}
In this paper we  investigate the isochronicity and linearizability problem for a cubic polynomial differential system
which can be considered as a generalization of the Riccati system.
Conditions for  isochronicity and linearizability are found.
 The global structure  of systems of the family with  an isochronous center is determined.
 Furthermore, we find the order of weak center and study the problem of local bifurcation of critical periods in a neighborhood of the center.
  \end{abstract}

  \keywords{Linearizability, isochronicity,   global structure, weak center, local bifurcation of critical periods}

\maketitle

\section{Introduction}

A classical problem
in the qualitative theory of ordinary differential equations is to characterize the existence  of centers and  isochronous centers.
A singular point of a planar autonomous differential system is called a {\it center} if all solutions sufficiently closed to it are periodic,
 that is, all trajectories in a small neighborhood of the singularity are ovals.
If all  periodic solutions inside the period annulus of the center have the same period it is said that the center is {\it isochronous}.

Poincar\'e and Lyapunov have shown that the existence of an isochronous center at the origin of a system of the form
\begin{equation} \label{sys introd 1}
\dot{x} = -y + P(x,y),
\qquad
\dot{y} = x + Q(x,y),
\end{equation}
where $P(x,y)$ and $ Q(x,y)$ are real polynomials without constant and linear terms, is  equivalent to the linearizability of the system.
This equivalence has made the studies of the isochronicity problem simpler, since the linearizability problem can be extended to the complex field, where the computational methods are more efficient.

The investigation on isochronicity of oscillations  started in the 17th century, when Huygens studied the cycloidal pendulum \cite{Huy}.
However, only in the second half of the last century the isochronicity problem began to be intensively studied.
In 1964  Loud \cite{Loud} found the necessary and sufficient conditions for isochronicity  of system \eqref{sys introd 1}
with $P$ and $Q$ being quadratic homegeneous polynomials.
Later on, the isochronicity problem was solved for system \eqref{sys introd 1} when $P$ and $Q$
are homogeneous polynomials of degree three \cite{P} (see also \cite{Li Jibin}) and degree five \cite{R-C-H}.
However in the case of the linear center perturbed by homogeneous polynomials of degree four
the problem is still unsolved, although some partial results were obtained \cite{C-G-G-2,GKLR}. The reason is that linearizability quantities (which are polynomials
in the parameters of system \eqref{sys introd 1} defined at the beginning
of Section 2)
have more complicate expressions in the case of homogeneous  perturbations  of degree four, than in the case of  homogeneous  perturbations  of degree five.
 There are also many works devoted to the investigation of particular families
of some other   polynomial systems, see e.g. \cite{AmLS1982, C-G-G-1, CHRZ, C-G-M-M, M-R-T, R-S} 
and references therein.
Many works also deal with investigation of isochronicity of Hamiltonian systems, see e.g. \cite{ChrisD1997, Cim-G-M2000, Ha1983, JarV2002, LliR2015}
and references given there.

The problem of critical period   bifurcations is tightly related to the isochronicity  problem.
In a neighborhood of a center the so-called {\it period function} $T(r)$  gives the least period of
the periodic solution passing through the point with coordinates $(x, y) =(r, 0)$ inside the period annulus of the center.
For a center that is not isochronous any value $r>0$ for which $T'(r) =0$ is called a critical period.
The problem of critical period   bifurcations  is aimed on estimating of the number of critical periods that can arise near the center under small perturbations.
In 1989,  Chicone and Jacobs \cite{CJ1989} introduced for the first time the theory of local bifurcations of critical periods
and solved the problem  for the quadratic system.
Local bifurcations of critical periods have been investigated
for cubic  systems with homogeneous nonlinearities \cite{RT1993}, the reduced
 Kukles system \cite{RT1997}, 
the Kolmogorov system \cite{CHRZ2014}, the $\mathrm{Z}_2$-equivariant   systems \cite{CHR2014}
and some other families (see e.g. \cite{CGS2008, GZ2008,YuH2009} and references therein).
In \cite{FerLRS2015} a general approach to studying bifurcations of critical periods based on a complexification of the system
 was described, and some upper bounds on the number of critical periods of several cubic systems were obtained.

In this paper we are interested in
the family of
Riccati systems.
The classic {\it Riccati system} is written
in the form
\begin{equation} \label{sys-Ric}
\dot{x} =  1, ~~~~
\dot{y} = g_2(x)y^2 + g_1(x)y + g_0(x),
\end{equation}
where  each $g_j(x)$ is a $\mathcal{C}^1$ function  with respect to   $x$
and $g_2(x)g_0(x)\not\equiv0$.
System \eqref{sys-Ric} becomes a special case of Berouilli system if $g_0(x)\equiv0$, and it obviously is a linear differential system if $g_2(x)\equiv0$.

The Riccati  equation  has been   invstigated by many authors, see for example \cite{Lli2014, Lli2015} and references therein.
They are important since they can be used to solve second-order ordinary differential equations and can be applied
in studying the third-order Schwarzian differential \cite{Neh1975}.
It also has many applications in both physics and mathematics.
For instance,   renormalization group equations for running coupling
constants in quantum field theories \cite{Buc1992}, nonlinear physics \cite{Mat1991},   Newton's laws of motion \cite{Now2002},
thermodynamics \cite{Ros2002} and  variational calculus \cite{Zel1998}.

Recently Llibre and Valls \cite{Lli2014, Lli2015} investigated  the planar differential system
\begin{equation*} 
\dot{x} =  f(y),~~~~
\dot{y} = g_2(x)y^2 + g_1(x)y + g_0(x),
\end{equation*}
which is called the {\it  generalized Riccati system}, since it becomes the classic   Riccati system when $f(y)\equiv 1 $.
In this paper we study a subfamily of the generalized Riccati system,   cubic  systems of the form
\begin{equation} \label{sys-Ric3}
\begin{aligned}
\dot{x}& =     -y + a_{02} y^2  +  a_{03} y^3,
\\
\dot{y} &=(b_{02}+ b_{12} x) y^2 + (b_{11} x + b_{21} x^2 ) y + (x + b_{20} x^2 +   b_{30} x^3)
    \\
    &= x +  b_{20} x^2 + b_{11} x y + b_{02} y^2  +   b_{30} x^3 + b_{21} x^2 y +
    b_{12} x y^2 ,
  \end{aligned}
\end{equation}
where $x, y $  are unknown real functions
and   $a_{ij}, b_{ij}$ are real parameters.
Note that system \eqref{sys-Ric3} is the so-called reduced Kukles system when $a_{02} = a_{03}=0$. 

The aims of our study are to obtain conditions on parameters   $a_{ij}$ and $b_{ij}$ for the linearizability  of system \eqref{sys-Ric3},
to study the global structures of trajectories when the system has an isochronous center, and to investigate the local bifurcations of critical periods
at the origin.
In Section \ref{Sec:Results} we present our main result on linearizability, Theorem \ref{Theorem 1}, which gives conditions for the linearizability of system \eqref{sys-Ric3}. We also describe an approach for deriving such conditions
which is based on making use of  modular computations which are performed in the
systems of computer algebra {\sc Singular} \cite{sing} and {\sc Mathematica} \cite{WM}.
The approach can be applied to investigate many problems involving solving
systems of algebraic polynomials.
In Section \ref{Sec:global} we study the global dynamics of system \eqref{sys-Ric3}  when the origin is an isochronous center.
The last section  is devoted to the investigation of local bifurcations  of critical periods in a neighborhood of the center.


\bigskip

\section{Linearizability  of system \eqref{sys-Ric3}} \label{Sec:Results}


We first briefly remind an approach for  studying the isochronicity and linearizability problems  for  polynomial differential systems of the form
\begin{equation}\label{System general real}
\dot{x} = -y + \sum_{p + q = 2}^n a_{p,q} x^{p} y^{q}, 
\hspace{0.3cm}
\dot{y} = x + \sum_{p + q = 2}^n b_{p,q} x^{p} y^{q}, 
\end{equation}
where $x,y $ and $a_{p,q}, b_{p,q}$ are in   $\mathbb{R}$. 

System \eqref{System general real} is  \textit{linearizable}
if there is an analytic change of coordinates
\begin{equation}\label{lin equa}
x_1 = x +\sum_{m+n \geq 2} c_{m,n} x^m y^n, \hspace{0.5cm} y_1 = y + \sum_{m+n \geq 2} d_{m,n} x^m y^n,
\end{equation}
which reduces \eqref{System general real} to the canonical linear system
$\dot{x}_1= -y_1$, $\dot{y}_1= x_1$.

Obstacles for existence of a transformation \eqref{lin equa} are some polynomials in  parameters  of system \eqref{System general real}
called the \textit{linearizability quantities} and denoted by $i_k, j_k$ ($k=1,2, ...$).

%
%

Differentiating with respect to $t$ both sides of each equation of \eqref{lin equa} we obtain
\begin{equation}\label{diff lin}
\begin{aligned}
\dot{x}_1 = & \dot{x} + \left( \sum_{m+n \geq 2} m c_{m,n} x^{m-1} y^n \right) \dot{x} + \left( \sum_{m+n \geq 2} n c_{m,n} x^m y^{n-1} \right) \dot{y},
\\
\dot{y}_1 = & \dot{y} + \left( \sum_{m+n \geq 2} m d_{m,n} x^{m-1} y^n \right) \dot{x} + \left( \sum_{m+n \geq 2} n d_{m,n} x^m y^{n-1} \right) \dot{y}.
\end{aligned}
\end{equation}
Substituting in the above equations the expressions from  \eqref{lin equa} and \eqref{System general real},
one computes the linearizability quantities $i_k, j_k$ step-by-step (see e.g.  \cite{FernR2017} for more details).


From \eqref{diff lin} it is easy to see that the linearizability quantities
$i_k, j_k$ are polynomials in   parameters $a_{p,q}, b_{p,q}$ of system \eqref{System general real}. We denote by $(a,b)$ the $s$-tuple ($s$ is the number of  parameters $a_{p,q}, b_{p,q}$ in system \eqref{System general real}) of parameters of \eqref{System general real}, so $(a,b)=(a_{2,0},a_{1,1}, \dots, b_{0,n})$,
and by $\mathbb{R}[a,b]$ and $\mathbb{C}[a,b]$ the rings
of polynomials in  $a_{p,q}, b_{p,q}$ with real and complex coefficients, respectively.

Thus, the simultaneous vanishing of all linearizability quantities $i_k, j_k$ provides conditions which characterize when a system of the form \eqref{System general real} is linearizable.
The ideal defined by the linearizability quantities, $\mathcal{L} = \langle i_1, j_1, i_2, j_2, ...  \rangle \subset \mathbb{R}[a,b]$, is called the   \textit{linearizability ideal}
and its affine variety, $V_{\mathcal{L}} = {\bf V} (\mathcal{L})$  is called the \textit{linearizability variety}.


In order to  find a linearizing change of coordinates explicitly  one can look for Darboux linearization.
To construct a Darboux linearization for system \eqref{System general real}
it is convenient to complexify the system using the substitution
\begin{equation} \label{zw}
z=x+iy, \qquad w=x-iy.
\end{equation}
Then, after a time rescaling by $i$ we obtain from \eqref{System general real} a system of the form
\begin{equation} \label{System general complex-1}
\dot{z}= z + X(z,w) , \qquad \dot{w}= - w - Y(z,w).
\end{equation}
System \eqref{System general real}
is linearizable if and only if system  \eqref{System general complex-1}
is linearizable.

A \textit{Darboux factor} of system \eqref{System general complex-1} is a polynomial $f(z,w)$ satisfying
\begin{equation*}
\dfrac{\partial f}{\partial z} \dot{z} + \dfrac{\partial f}{\partial w} \dot{w} = K f,
\end{equation*}
where polynomial $K(z,w)$ is  called the \textit{cofactor of $f$}.
A \textit{Darboux linearization} of system \eqref{System general complex-1} is an analytic change of coordinates  $z_1 =  Z_1 (z,w)$,
$w_1 = W_1 (z,w)$, such that
\begin{equation*}
\begin{aligned}
Z_1(z,w) = \prod_{j=0}^{m} f_j^{\alpha_j}(z,w) = z + \tilde{Z}_1 (z,w),
\\
W_1(z,w) = \prod_{j=0}^{n} g_j^{\beta_j}(z,w) = w + \tilde{W}_1 (z,w),
\end{aligned}
\end{equation*}
which linearizes \eqref{System general complex-1},
where $f_j, g_j \in \mathbb{C}[z,w]$, $\alpha_j, \beta_j \in
\mathbb{C}$, and $\tilde{Z}_1$ and $\tilde{W}_1$ have neither constant terms nor
linear terms.


It is easy to see that system \eqref{System general complex-1} is Darboux linearizable if there exist $s+1 \geq 1$ Darboux factors $f_0,...,f_s$ with corresponding cofactors $K_0,...,K_s$, and $t+1 \geq 1$ Darboux factors $g_0,...,g_t$ with corresponding cofactors $L_0,...,L_t$  with the following properties:
\begin{enumerate}
	\item[(i)] $f_0(z,w) = z +  \cdot \cdot \cdot \mbox{ but }  f_j(0,0) = 1 \mbox{ for } j \geq 1$;
	\item[(ii)] $g_0(z,w) = w +  \cdot \cdot \cdot \mbox{ but }   g_j(0,0) = 1 \mbox{ for }  j \geq 1$; and
	\item[(iii)] there are $s+t$ constants $\alpha_1,...,\alpha_s, \beta_1,...,\beta_t \in \mathbb{C}$ such that
\begin{equation}  \label{cond 1}
K_0 + \alpha_1 K_1 + \cdot \cdot \cdot + \alpha_s K_s = 1
\hspace{0.3cm} \mbox{  and  } \hspace{0.3cm}
L_0 + \beta_1 L_1 + \cdot \cdot \cdot + \beta_t L_t = -1.
\end{equation}
\end{enumerate}

The Darboux linearization is then given by the transformations
\begin{equation*}
z_1 = H_1(z,w) = f_0 f_1^{\alpha_1} \cdot \cdot \cdot f_s^{\alpha_s},
\qquad
y_1 = H_2(z,w) = g_0 g_1^{\beta_1} \cdot \cdot \cdot g_t^{\beta_t} .
\end{equation*}
The readers can consult \cite{CR2001, M-R-T, R-S} for more details.

Before passing to the results of our paper we remind some
fact about solutions of systems of nonlinear polynomial
equations which we will need for our study.

Denote by $ k[x_1,\dots,x_n] $
the ring of polynomials  with coefficients in a field $k$
and
consider a system of polynomials of   $ k[x_1,\dots,x_n] $:
\begin{eqnarray}\label{s1}
f_1(x_1,\dots,x_n)&=&0,\nonumber\\
\qquad\quad\ \vdots&&\\
f_m(x_1,\dots,x_n)&=&0.\nonumber
\end{eqnarray}

We recall that the ideal $I $ in  $ k[x_1,\dots,x_n] $
 generated by polynomials $f_1,\dots, f_m $, denoted by
$I= \langle f_1, \dots, f_m \ra $, is the set of all polynomials of  $ k[x_1,\dots,x_n] $
expressed in the form $f_1 h_1+f_2 h_2+\dots+ f_m h_m$, where $h_1, h_2, \dots, h_m$
are polynomials of  $ k[x_1,\dots,x_n] $.    The variety
of the ideal  $I=\langle f_1,\dots, f_m \rangle \subset k[x_1,\dots,x_n] $ in $k^n$,
 denoted by $\vv(I)$,   is the zero set of all  polynomials of $I$,
$$
\vv(I)=\left\{ A=(a_1,\dots,a_n) \in k^n  | f(A)=0 \quad {\rm for \ all} \ f\in I
\right\}.
$$
The situation when the variety of a polynomial ideal consists of a
finite number of points
 arises very rarely. In a generic  case, the variety consists of infinitely
many points, so generally speaking,``to solve" system  (\ref{s1}) means to find a  decomposition
of the variety of the ideal into  irreducible components.
More precisely, an affine variety $V \subset k^n $
is \emph{irreducible} if, whenever $V = V_1 \cup V_2$ for affine
varieties $V_1$ and $V_2$, then either $V_1 = V$ or $V_2 = V$.
Let $I$ be an ideal and $V=\vv(I)$ its variety. Then   $V $
 can be represented as a union of irreducible components,
$
V = V_1 \cup \dots \cup
V_m,
$
where each  $V_i$ is irreducible.
The  radical of $I$ denoted by $\sqrt{I}$ is
the set of all polynomials $f$  of  $k[x_1,\dots,x_n] $
such that for some non-negative integer  $p$ $f^p$ is in $I$.
Clearly,  $I$ and $\sqrt{I}$ have the same
varieties.
It is known that $\sqrt{I}$ can be expressed as  an intersection of prime ideals,
$
\sqrt{I} = \cap_{j = 1}^s Q_j.
$
Prime ideals
$Q_i$ are   called \emph{the minimal associate primes of $I$}.
Let
$V_i$ ($i=1,\dots, s$) be the variety of $Q_i$.
Since the variety of an  intersection of some ideals
is equal to the union of the varieties of the ideals,
we have that $\vv(I)=\vv(\sqrt{I})=\cap_{j=1}^{s} V_j$.
For example,  if  $I=\langle x^2y^3, xz^5\ra$, then
$\sqrt{I}=\langle  x y, x z \ra
= \langle x \ra \cap \langle y, z\ra $, that is,
the variety of $I$ is the union of two irreducible
components:  the plane $x=0$ and the line $ y=z=0$.
In the computer algebra system {\sc Singular} \cite{sing} one can compute
the minimal associate primes of a given polynomial
ideal and, thus, the irreducible decomposition of
its variety  using the routine \texttt{minAssGTZ}.

Proceeding now to the results of our paper we first state
the following theorem on
 the linearizability  of system \eqref{sys-Ric3}.

\begin{theorem} \label{Theorem 1}
System \eqref{sys-Ric3}  is linearizable at the origin if one of the following conditions holds:
\begin{enumerate}
	\item[(1)] $b_{12} = a_{02} =  b_{30} =  b_{21} =  a_{03} =  b_{02} + b_{20} = b_{11}^2 + 4 b_{20}^2 = 0$,
	\item[(2)] $b_{12} = a_{02} =  b_{20} =  b_{02} =  b_{21} =  a_{03} =  9 b_{30} - b_{11}^2 =0 $,
 	\item[(3)] $b_{12} =  a_{02} =  b_{11} =  b_{20} =  b_{30} =  b_{21} =  9 a_{03} + 4 b_{02}^2 = 0$,
 	\item[(4)] $ b_{12} =  b_{30} =  b_{21} =  a_{03} =   2 b_{02} + 5 b_{20} = 10 a_{02} - 3 b_{11} = 4 b_{11}^2 + 25 b_{20}^2 = 0$.
\end{enumerate}
\end{theorem}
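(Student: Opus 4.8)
My plan is to prove the four sufficient conditions by exhibiting, for each case separately, an explicit Darboux linearization, and then invoking the criterion recorded in (i)--(iii) above. By the Poincar\'e--Lyapunov equivalence between isochronicity and linearizability, together with the stated fact that \eqref{System general real} is linearizable if and only if its complexification \eqref{System general complex-1} is, it suffices to work throughout with the complex form $\dot z = z + X(z,w)$, $\dot w = -w - Y(z,w)$. The first, routine, step is therefore to substitute each of the four parameter sets into \eqref{sys-Ric3}, apply the substitution \eqref{zw} and the time rescaling by $i$, and write down the four resulting concrete complex polynomial systems explicitly.

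The core of the argument is then, in each case, to locate enough Darboux factors. Concretely, I would search for invariant algebraic curves of low degree: a curve $f_0 = z + \cdots$ tangent to $\{z=0\}$ and a curve $g_0 = w + \cdots$ tangent to $\{w=0\}$, together with a small number of auxiliary factors $f_j, g_j$ normalized by $f_j(0,0) = g_j(0,0) = 1$. For a candidate $f$ of prescribed degree with undetermined coefficients, the defining relation $f_z\,\dot z + f_w\,\dot w = K f$ with polynomial cofactor $K$ becomes a linear system in the coefficients of $f$ and $K$, which I would solve. Having assembled the cofactors $K_0,\dots,K_s$ and $L_0,\dots,L_t$, the decisive check is the existence of constants $\alpha_j,\beta_j\in\mathbb{C}$ realizing \eqref{cond 1}; this is once more a linear system, now in the $\alpha_j,\beta_j$. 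As soon as \eqref{cond 1} holds, the transformation $z_1 = f_0\prod_j f_j^{\alpha_j}$, $w_1 = g_0\prod_j g_j^{\beta_j}$ is the desired Darboux linearization and that case is settled.

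I expect the main obstacle to be the discovery of the invariant curves themselves: there is no a priori bound guaranteeing that lines and conics suffice, so each case must be treated individually and may force factors of higher degree carrying irrational or complex coefficients. Indeed, the quadratic relations $b_{11}^2 + 4b_{20}^2 = 0$ in (1), $9a_{03} + 4b_{02}^2 = 0$ in (3) and $4b_{11}^2 + 25b_{20}^2 = 0$ in (4) already signal that the cofactors, and hence the exponents $\alpha_j,\beta_j$, will involve such coefficients. Cases (1) and (2), where $a_{02}=a_{03}=0$ so that \eqref{sys-Ric3} reduces to the reduced Kukles system, are likely the most tractable and may even be read off from known Kukles linearizations; case (4), which carries the largest number of coupled constraints together with a nonzero $a_{02}$, I anticipate to be the heaviest computation. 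In every case, once a factor and its cofactor are proposed the verification of $f_z\,\dot z + f_w\,\dot w = K f$ and of \eqref{cond 1} is a routine polynomial identity, readily confirmed symbolically in {\sc Mathematica}.
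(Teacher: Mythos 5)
Your proposal is correct and follows essentially the same route as the paper: for each of the four conditions one complexifies the system via \eqref{zw}, finds low-degree Darboux factors with cofactors satisfying \eqref{cond 1} (in fact quadratic factors suffice in every case, with exponents $\pm1$ or $1,-3$), and reads off the linearizing transformation $z_1 = f_0\prod_j f_j^{\alpha_j}$, $w_1 = g_0\prod_j g_j^{\beta_j}$; your observation that conditions such as $b_{11}^2+4b_{20}^2=0$ force complex parameter values (handled in the paper by the branches $b_{11}=\pm 2b_{20}i$) matches the paper's treatment. The only minor deviation is that for condition (1) the paper does not construct factors at all but notes that the complexified system is quadratic, namely $\dot z = z + b_{20}z^2$, $\dot w = -w - b_{20}z^2$, and invokes known linearizability results for quadratic systems (Theorem 3.1 of \cite{CR2001}, Theorem 4.5.1 of \cite{R-S}), a shortcut your direct search would reproduce anyway.
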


\begin{proof}
Using the computer algebra system {\sc Mathematica} and the standard procedure mentioned above for system \eqref{sys-Ric3}
we have computed the first eight pairs of the linearizability quantities $i_1,j_1,..., i_8,j_8$.
Their expressions are very large, so we only present the first two pairs in the Appendix.
The reader can easily compute the other quantities using any available computer algebra system
\footnote{
One can download linearizability quantities $i_1$, $j_1$, $\ldots$,
$i_8$, $j_8$ and the {\sc Singular} code to perform the decomposition
of the variety from
\href{http://teacher.shnu.edu.cn/_upload/article/files/79/14/f36e87e342b8b0d6977e6debdeb3/3b818cf4-a6f7-4f07-8669-f4b78e48f733.txt}
{http://teacher.shnu.edu.cn/\textunderscore upload/article/files/79/14/ f36e87e342b8b0d6977e6debdeb3/3b818cf4-a6f7-4f07-8669-f4b78e48f733.txt}.}.

The next computational step is to compute the irreducible decomposition of the variety ${\bf V} (\mathcal{L}_8) = {\bf V} (\langle i_1, j_1,...,i_8, j_8 \rangle )$.

Performing the computations by the routine \texttt{minAssGTZ} \cite{D-L-P-S} of {\sc Singular} \cite{sing} over the field of characteristic 32452843 we obtain that  ${\bf V} (\mathcal{L}_9)$ is equal to the union of the varieties of four ideals.
After lifting these four ideals to the ring of polynomials with rational coefficients using the rational reconstruction algorithm of  \cite{WGD}
we obtain the ideals
\begin{equation*}
\begin{aligned}
J_1 =& \langle b_{12}, a_{02},  b_{30},  b_{21}, a_{03},  b_{02} + b_{20}, b_{11}^2 + 4 b_{20}^2 \rangle,
\\
J_2 =& \langle b_{12}, a_{02}, b_{20},  b_{02},  b_{21},  a_{03},  9 b_{30} - b_{11}^2 \rangle,
\\
J_3 =& \langle b_{12},  a_{02},  b_{11},  b_{20},  b_{30},  b_{21},  9 a_{03} + 4 b_{02}^2 \rangle,
\\
J_4 =& \langle  b_{12},  b_{30},  b_{21},  a_{03},   2 b_{02} + 5 b_{20}, 10 a_{02} - 3 b_{11}, 4 b_{11}^2 + 25 b_{20}^2 \rangle.
\end{aligned}
\end{equation*}
The varieties of $J_1$, $J_2$, $J_3$ and $J_4$  provide conditions $(1)$, $(2)$, $(3)$ and $(4)$ of the theorem, respectively.

To check the correctness of the obtained
conditions we use the procedure described  in \cite{R-P}.
First, we computed the ideal
$ J=  J_1 \cap  J_2 \cap  J_3 \cap J_4 $, which defines the union of all four sets given in the statement of the theorem.
Then we check that  ${\bf V}(J) = {\bf V}(\mathcal{L}_9)$.
 According to the Radical Membership Test, to verify the inclusion  ${\bf V}(J)\supset {\bf V}(\mathcal{L}_9)$
 it is sufficient to check that the Groebner bases of all ideals  $\langle J, 1-w i_k \rangle$,    $\langle J, 1-w j_k \rangle$
 (where $k=1,\dots, 9$ and $w$ is a new variable) computed over $\mathbb{Q}$ are $\{1\}$.
  The computations show that this is the case.
 To check the
 opposite inclusion, ${\bf V}(J)\subset {\bf V}(\mathcal{L}_9)$, it is sufficient to check that  Groebner bases of
  the ideals $ \langle \mathcal{L}_9, 1-w f_i \rangle$ (where the polynomials  $f_i$'s are the polynomials of a basis of $J$)  computed over $\mathbb{Q}$
are equal to  $\{ 1 \}$.
Unfortunately,  we were not able
to perform these computations
over  $\mathbb{Q}$
however we have checked that all the bases are  $\{ 1 \}$ over few fields of finite characteristic.
It yields that the list of conditions in Theorem \ref{Theorem 1} is
the complete list  of linearizability  conditions for system \eqref{sys-Ric3}  with high probability \cite{EA}.

We now prove that under each of conditions
$(1)$--$(4)$ of the theorem the system is linearizable.

$Condition \hspace{0.1cm} (1).$
In this case $b_{11} = \pm 2 b_{20} i$. We consider only the case  $b_{11} =   2 b_{20} i$,
since when  $b_{11} = -2 b_{20} i$  the proof is analogous. After the change of variables \eqref{zw} system \eqref{sys-Ric3} becomes
\begin{equation} \label{sys1-1}
\begin{aligned}
\dot{z}= &  z+  b_{20} z^2,
\\
\dot{w}= &  - w-b_{20} z^2,
\end{aligned}
\end{equation}
which is a quadratic system. By Theorem 3.1 of \cite{CR2001}
 and   Theorem 4.5.1 of    \cite{R-S}
 system \eqref{sys1-1} is Darboux  linearizable and, therefore, system \eqref{sys-Ric3} is linearizable if condition $(1)$ holds.
%

\medskip

$Condition \hspace{0.1cm} (2).$ After substitution \eqref{zw}  system \eqref{sys-Ric3} becomes
 \begin{equation} \label{sys2-1}
\begin{aligned}
\dot{z}= &    z+ \frac{1}{72}( - 18 i b_{11} z^2 +18 i b_{11} w^2 + b_{11}^2 z^3 + 3 b_{11}^2 z^2 w + 3 b_{11}^2 z w^2 + b_{11}^2 w^3),
\\
\dot{w}= &   -w+  \frac{1}{72}(18 i b_{11} z^2-18 i b_{11} w^2 - b_{11}^2 z^3   - 3 b_{11}^2 z^2 w - 3 b_{11}^2 z w^2 - b_{11}^2 w^3).
\end{aligned}
\end{equation}
It has the  Darboux factors
\begin{equation*}
\begin{aligned}
 l_1 =&  z +  \frac{i b_{11}}{12} z^2  + \frac{i b_{11}}{6} z w  + \frac{i b_{11}}{12} w^2,
 \\
 l_2 =&   w -\frac{i b_{11}}{12} z^2- \frac{i b_{11}}{6} z w - \frac{i b_{11}}{12} w^2,
 \\
l_3 =&    1 - \frac{i b_{11}}{6} z+ \frac{b_{11}^2 }{36}z^2 + \frac{i b_{11}}{6} w +\frac{b_{11}^2}{18} z w +\frac{b_{11}^2}{36} w^2,
\\
l_4 =&    1 - \frac{i b_{11}}{3} z +\frac{b_{11}^2}{36} z^2 +\frac{i b_{11}}{3} w +\frac{b_{11}^2}{18} z w +\frac{b_{11}^2}{36} w^2
 \end{aligned}
\end{equation*}
with the respective cofactors
 \begin{equation*}
 \begin{aligned}
 &k_1 =   1 - \frac{i b_{11}}{6} z - \frac{i b_{11}}{6} w,
 ~&k_2 = -1 -\frac{i b_{11}}{6} z -\frac{i b_{11}}{6} w,
  \\
 &k_3 =  -\frac{i b_{11}}{6} z -  \frac{i b_{11}}{6} w,
~&k_4 = -\frac{i b_{11}}{3} z -\frac{ i b_{11}}{3} w.
\end{aligned}
\end{equation*}
It is easy to verify that  \eqref{cond 1} is satisfied
 with  $\alpha_1 =  1, ~\alpha_2 =  -1, ~ \beta_1 = 1$ and $ \beta_2 =  -1$.
 Hence the Darboux linearization for system \eqref{sys2-1}
is given by the analytic change of coordinates
 \begin{equation*}
 z_1 =  l_1 l_3^{\alpha_1} l_4^{\alpha_2},
\hspace{0.3cm}
 w_1 =   l_2 l_3^{\beta_1} l_4^{\beta_2}.
 \end{equation*}
Thus, system \eqref{sys2-1} is linearizable and therefore the corresponding system \eqref{sys-Ric3}  is linearizable as well.

\medskip

$Condition\hspace{0.1cm} (3).$
In this case after substitution \eqref{zw} the corresponding system \eqref{sys-Ric3} is changed to
\begin{equation} \label{sys3-1}
\begin{aligned}
\dot{z}= & z+ \frac{1}{36}(- 9 b_{02} z^2+ 18 b_{02} z w -9 b_{02} w^2 - 2 b_{02}^2 z^3 + 6 b_{02}^2 z^2 w - 6 b_{02}^2 z w^2 + 2 b_{02}^2 w^3),
\\
\dot{w}= & -w+ \frac{1}{36}(9 b_{02} z^2- 18 b_{02} z w+9 b_{02} w^2 - 2 b_{02}^2 z^3 + 6 b_{02}^2 z^2 w - 6 b_{02}^2 z w^2 + 2 b_{02}^2 w^3).
\end{aligned}
\end{equation}
System \eqref{sys3-1} has the Darboux factors
\begin{equation*}
\begin{aligned}
 l_1 =&  z -  \frac{b_{02}}{12} z^2  +  \frac{b_{02}}{6} z w - \frac{b_{02}}{12} w^2, \hspace{0.3cm}
 \\
 l_2 =&   w - \frac{b_{02}}{12} z^2  +  \frac{b_{02} }{6}z w - \frac{b_{02}}{12} w^2,
 \\
 l_3 =&   1 -\frac{2 b_{02} }{3}z +  \frac{2 b_{02}^2}{9} z^2 -\frac{2 b_{02}}{3} w  -
   \frac{4 b_{02}^2}{9} z w +\frac{2 b_{02}^2}{9} w^2,
 \\
l_4 =& 1 - \frac{b_{02}}{3} z + \frac{b_{02}^2}{18} z^2 - \frac{b_{02}}{3} w - \frac{b_{02}^2}{9} z w +
 \frac{ b_{02}^2}{18} w^2,
 \end{aligned}
\end{equation*}
 which allow to construct  the Darboux linearization
 \begin{equation*}
 z_1 =  l_1 l_3^{\alpha_1} l_4^{\alpha_2},
\hspace{0.3cm}
 w_1 =   l_2 l_3^{\beta_1} l_4^{\beta_2},
 \end{equation*}
 where $\alpha_1 = 1,  ~\alpha_2 =  -3, ~ \beta_1 = 1$ and $ \beta_2 =  -3$.


\medskip

$Condition \hspace{0.1cm} (4).$
For this condition  it is easy to see that  $ b_{11} =\pm 5 b_{20}i/2 $.
We consider only the case  $ b_{11} =  5 b_{20}i/2$,
since when  $ b_{11} =-5 b_{20}i/2$  the proof is analogous.
After transformation \eqref{zw} system \eqref{sys-Ric3} becomes
\begin{equation} \label{sys4-1}
\begin{aligned}
\dot{z}= &  z +\frac{1}{16}(  21 b_{20} z^2 - 6 b_{20} z w + b_{20} w^2),
\\
\dot{w}= &  -w + \frac{1}{16}(-27 b_{20} z^2  + 18 b_{20} z w - 7 b_{20} w^2).
\end{aligned}
\end{equation}
System \eqref{sys4-1} has the  Darboux factors
\begin{equation*}
\begin{aligned}
 l_1 =&   z +\frac{1}{16}3 b_{20} z^2 +\frac{1}{8}b_{20} z w + \frac{1}{48}b_{20} w^2,
 \\
  l_2 =&  w + \frac{1}{16}9 b_{20} z^2 + \frac{3 b_{20}}{8} z w + \frac{b_{20}}{16} w^2,
 \\
 l_3 =&   1 + 3 b_{20} z +  \frac{27 b_{20}^2 }{8}z^2 + b_{20} w - \frac{3 b_{20}^2}{4} z w +
  \frac{3 b_{20}^2}{8} w^2,
 \\
l_4 =&  1 +  \frac{3 b_{20}}{2} z +  \frac{9 b_{20}^2}{16} z^2  +  \frac{b_{20}}{2} w  +
  \frac{3 b_{20}^2}{8} z w +  \frac{b_{20}^2}{16} w^2,
 \end{aligned}
\end{equation*}
yielding  the Darboux linearization
\begin{equation*}
 z_1 =  l_1 l_3^{\alpha_1} l_4^{\alpha_2},
\hspace{0.3cm}
 w_1 =   l_2 l_3^{\beta_1} l_4^{\beta_2},
 \end{equation*}
 where $\alpha_1 = 1, ~ \alpha_2 =  -3,  ~ \beta_1 = 1$ and $\beta_2 =  -3$. \end{proof}
%

\bigskip


\section{Global dynamics of  system \eqref{sys-Ric3} having an isochronous center}
\label{Sec:global}

\medskip


Global phase portrait of a planar autonomous system is usually plotted on the Poincar\'e disc, which is obtained using
the Poincar\'e  compactification. We remind the procedure briefly, for more details see for instance  \cite{AndLGM1973, D-L-A}.

Consider  the planar vector field
$$
\mathcal{X}= \tilde{P}(x,y) \dfrac{\partial}{\partial x} + \tilde{Q}(x,y) \dfrac{\partial}{\partial y},
$$
where $\tilde{P}(x,y)$ and $\tilde{Q}(x,y)$ are polynomials of degree $n$.
Let  $\mathbb{S}^2 = \{ y=(y_1,y_2,y_3) \in  \mathbb{R}^3 : y_1^2+y_2^2+y_3^2 =1 \}$,
$\mathbb{S}^1$ be the equator of  $\mathbb{S}^2$ and $p(\mathcal{X})$ be the \textit{Poincar\'e compactification} of $\mathcal{X}$ on $\mathbb{S}^2$.
On  $\mathbb{S}^2 \setminus \mathbb{S}^1$ there are two symmetric copies of $\mathcal{X}$, and once we know
the behaviour of $p(\mathcal{X})$ near $\mathbb{S}^1$, we know the behaviour of $\mathcal{X}$ in a neighbourhood of the infinity.
The Poincar\'e compactification has the property that $\mathbb{S}^1$ is invariant under
the flow of $p(\mathcal{X})$. The projection of the closed northern hemisphere of $\mathbb{S}^2$ on $y_3 = 0$
under $(y_1, y_2, y_3) \mapsto (y_1, y_2)$ is called the \textit{Poincar\'e disc}, and its boundary is $\mathbb{S}^1$.

Because $\mathbb{S}^2$ is a differentiable manifold, we consider the six local charts $U_i = \{ y \in  \mathbb{S}^2 : y_i > 0 \}$ and $V_i = \{ y \in  \mathbb{S}^2 : y_i < 0 \}$ for computing the expression of $p(\mathcal{X})$ where $i = 1, 2, 3$.
The diffeomorphisms $F_i : U_i \rightarrow \mathbb{R}^2$ and $G_i : V_i \rightarrow \mathbb{R}^2$ for $i = 1, 2, 3$ are the inverses of the central projections from the planes tangent at the points $(1, 0, 0), (-1, 0, 0), (0, 1, 0), (0,-1, 0), (0, 0, 1)$, and $(0, 0,-1)$ respectively.
We denote by $(u, v)$ the value of $F_i(y)$ or $G_i(y)$ for any $i = 1, 2, 3$.

The expression for $p(\mathcal{X})$ in the local chart $(U_1, F_1)$ is given by
$$
\dot{u} = v^n \left[ -u \tilde{P} \left( \dfrac{1}{v}, \dfrac{u}{v}\right) + \tilde{Q} \left( \dfrac{1}{v}, \dfrac{u}{v}\right) \right],
\hspace{0.3cm}
\dot{v} = - v^{n+1} \tilde{P} \left( \dfrac{1}{v}, \dfrac{u}{v}\right),
$$
for $(U_2, F_2)$ is
$$
\dot{u} = v^n \left[ \tilde{P} \left( \dfrac{u}{v}, \dfrac{1}{v}\right) -u \tilde{Q} \left( \dfrac{u}{v}, \dfrac{1}{v}\right) \right],
\hspace{0.3cm}
\dot{v} = - v^{n+1} \tilde{Q} \left( \dfrac{u}{v}, \dfrac{1}{v}\right),
$$
and for $(U_3, F_3)$ is
$$
\dot{u} = \tilde{P}(u,v),
\hspace{0.3cm}
\dot{v} = \tilde{Q}(u,v).
$$

The expressions for $V_i$'s are the same as that
for $U_i$'s but multiplied by the factor $(-1)^{n-1}$. In these coordinates $v = 0$ always denotes the points of $\mathbb{S}^1$.
When we study the infinite singular points on the charts $U_2 \cup V_2$, we only need to verify if the origin of these charts are
singular points.

It is said that two polynomial vector fields $\mathcal{X}$ and $\mathcal{Y}$ on $\mathbb{R}^2$ are \textit{topologically equivalent} if there exists a homeomorphism on
$\mathbb{S}^2$ preserving the infinity $\mathbb{S}^1$ carrying orbits of the flow induced by $p(\mathcal{X})$ into orbits of the flow induced by $p(\mathcal{Y})$, preserving
or not the sense of all orbits.


\medskip

In this section, we study the global structures of system \eqref{sys-Ric3} in Poincar\'e discs for the case when it has an isochronous center  listed in Theorem \ref{Theorem 1}.

\begin{theorem}
\label{th-global}
The global phase portrait of system \eqref{sys-Ric3} possessing an isochronous center
 listed in Theorem \ref{Theorem 1} is topologically equivalent to one of
 phase portraits in Fig. \ref{fig-theo}.
 More precisely,  there exists only one equilibrium of system  \eqref{sys-Ric3} in the plane, which is an isochronous center at the origin.
  The neighborhood of equilibrium at infinity  consists of one elliptic sector and three hyperbolic sectors (or
    two  hyperbolic sectors and two parabolic sectors)  under conditions $(2)$ and $b_{11}\ne0$ (or under conditions $(3)$ and $b_{02}\ne0$);
    otherwise, the isochronous center is global.
\begin{figure}[htbp]
  \centering
\includegraphics[scale=0.35]{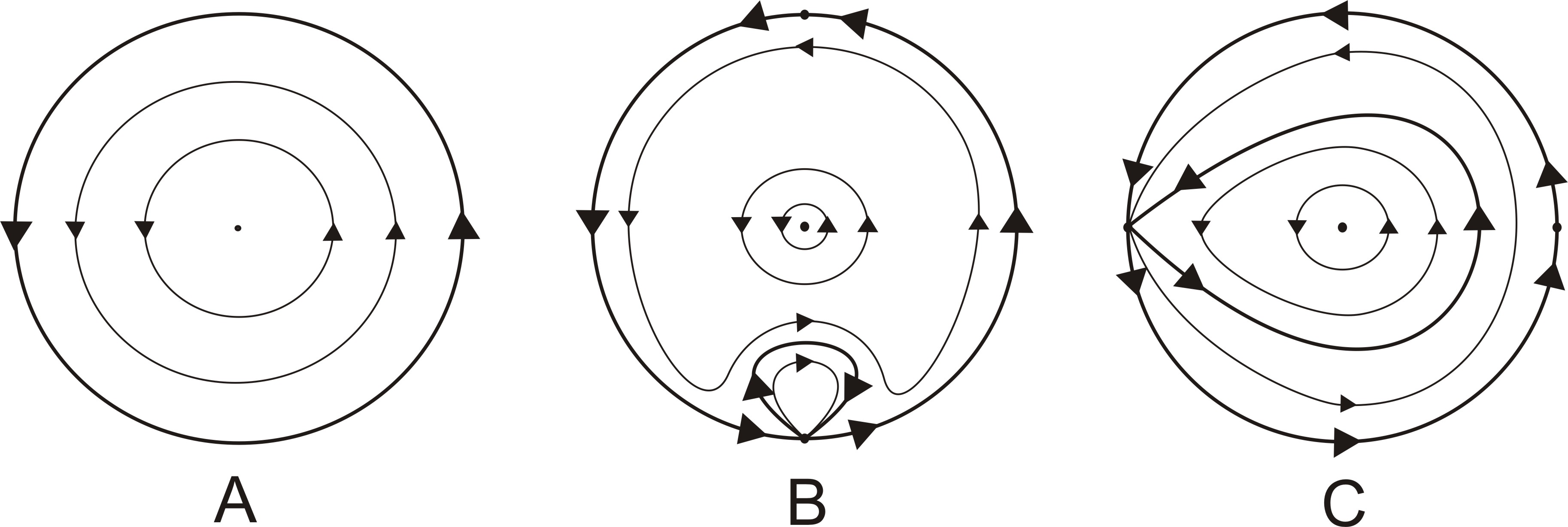}
  \caption{Global phase portraits of system \eqref{sys-Ric3} possessing an isochronous center  listed in Theorem \ref{Theorem 1}.}  \label{fig-theo}
  \end{figure}

\end{theorem}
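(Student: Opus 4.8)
The plan is to handle the four linearizability conditions of \thmref{Theorem 1} one at a time: reduce each to an explicit real system, locate every finite and infinite singularity, and resolve the degenerate points at infinity by blow-up to read off the sectorial structure on the Poincar\'e disc. I begin with the case reduction. Over the reals the relations $b_{11}^2+4b_{20}^2=0$ in $(1)$ and $4b_{11}^2+25b_{20}^2=0$ in $(4)$ are sums of squares, so they force $b_{11}=b_{20}=0$; together with $b_{02}+b_{20}=0$ (resp. $2b_{02}+5b_{20}=0$, $10a_{02}-3b_{11}=0$) this kills every nonlinear coefficient and collapses \eqref{sys-Ric3} to the linear center $\dot x=-y$, $\dot y=x$. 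Conditions $(2)$ and $(3)$ reduce to the one-parameter families
\begin{equation*}
\dot x=-y,\qquad \dot y=x+b_{11}xy+\tfrac{b_{11}^2}{9}x^3,
\end{equation*}
and
\begin{equation*}
\dot x=-y-\tfrac{4b_{02}^2}{9}y^3,\qquad \dot y=x+b_{02}y^2,
\end{equation*}
each of which again degenerates to the linear center when $b_{11}=0$ (resp. $b_{02}=0$). For the linear center the compactification is classical: the equator is singularity-free and is itself a periodic orbit, so every nonconstant orbit is periodic and the center is global. This settles every case gathered under ``otherwise''.

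It remains to treat $(2)$ with $b_{11}\neq0$ and $(3)$ with $b_{02}\neq0$. First I would check that the origin is the unique finite singularity: in $(2)$, $\dot x=0$ gives $y=0$ and then $\dot y=x(1+\tfrac{b_{11}^2}{9}x^2)=0$ forces $x=0$; in $(3)$, $\dot y=0$ gives $x=-b_{02}y^2$ and then $\dot x=-y(1+\tfrac{4b_{02}^2}{9}y^2)=0$ forces $y=0$. By \thmref{Theorem 1} this singularity is an isochronous center. I would then record the real invariant curves coming from the Darboux factors $l_3,l_4$ of \thmref{Theorem 1}: pulling them back through \eqref{zw} yields, in $(2)$, the parabolas
\begin{equation*}
l_3=1+\tfrac{b_{11}}{3}y+\tfrac{b_{11}^2}{9}x^2,\qquad l_4=1+\tfrac{2b_{11}}{3}y+\tfrac{b_{11}^2}{9}x^2,
\end{equation*}
and, in $(3)$, analogous parabolas opening along the $x$-axis. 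Carrying no finite singularity, each such curve is a single trajectory running from infinity to infinity; these are the separatrices that bound the period annulus and funnel the remaining orbits toward the infinite singularity.

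The core of the proof is the behaviour at infinity. Writing $p(\mathcal{X})$ in the charts $U_1,U_2$, I would show that in $(2)$ the equator is singularity-free in $U_1$ (there $\dot u|_{v=0}=b_{11}^2/9\neq0$), leaving a single antipodal pair of infinite singularities at the origin of $U_2$ (the $y$-direction); in $(3)$ the unique infinite singularity lies instead at the origin of $U_1$ (the $x$-direction). In both cases the linear part vanishes and the quadratic part is merely $\dot u=\mp v^2$, $\dot v=0$, so the point is strongly degenerate and ordinary local theory does not apply. To unfold it I would perform a quasi-homogeneous blow-up guided by the Newton diagram, determine and classify the elementary singularities that appear on the exceptional divisor, and then reassemble the local sectors; this is expected to produce one elliptic and three hyperbolic sectors in case $(2)$ and two hyperbolic and two parabolic sectors in case $(3)$. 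Heuristically the elliptic sector in $(2)$ reflects the bounded period annulus capping off at the infinite point, whereas in $(3)$ the separatrices arrive so as to form parabolic sectors instead.

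Finally I would assemble the compactified portraits from the unique finite center, the invariant-parabola separatrices, and the sectorial decomposition at infinity, verify that no further separatrix connections occur, and match each configuration against those in Fig.~\ref{fig-theo}. The decisive obstacle is the resolution of the degenerate infinite singularity: with vanishing linear part and a degenerate quadratic part, neither linear nor nilpotent normal forms apply, and one must iterate blow-ups until every singularity on the divisor is elementary, keep track of all the blow-up charts, and then correctly translate the result back into a count of elliptic, hyperbolic and parabolic sectors on the Poincar\'e disc. Getting this bookkeeping right --- and thereby distinguishing the $1$-elliptic/$3$-hyperbolic pattern of $(2)$ from the $2$-hyperbolic/$2$-parabolic pattern of $(3)$ --- is where the genuine difficulty lies.
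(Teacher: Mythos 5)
Your proposal follows essentially the same route as the paper's proof: over $\mathbb{R}$ conditions (1) and (4) collapse to the linear global center, conditions (2) and (3) reduce to the one-parameter cubic families, the origin is checked to be the unique finite singularity, the single degenerate equilibrium at infinity is located (origin of $U_2$ in case (2), of $U_1$ in case (3), each with vanishing linear part and quadratic part $\dot u=\mp v^2$, $\dot v=0$), and it is resolved by repeated directional blow-up to read off the sector structure. The only difference is one of execution versus announcement: you state the sector counts as ``expected'' from the blow-up rather than carrying it out, which is precisely the step the paper itself compresses into ``applying the directional blow-up in the $v$-axis twice'' together with its Figures~\ref{fig-1} and \ref{fig-2}; your additional identification of the real invariant parabolas coming from the Darboux factors $l_3,l_4$ of Theorem~\ref{Theorem 1} is a correct supplementary observation not used in the paper's argument.
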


\begin{proof}
From Theorem \ref{Theorem 1} we have that under conditions $(1)$--$(4)$ system \eqref{sys-Ric3} is linearizable.
Under conditions $(1)$ and $(4)$ real systems   \eqref{sys-Ric3} becomes the linear system $\dot{x}=-y, ~\dot{y}=x$ and
its phase portrait is presented in Figure \ref{fig-theo}.A.

Under conditions $(2)$ and $(3)$ system  \eqref{sys-Ric3} becomes
\begin{equation} \label{sys2-2}
\dot{x} =     -y,
\qquad
\dot{y} =  x +   b_{11} x y  +   \frac{b_{11}^2}{9} x^3,
 \end{equation}
and
\begin{equation} \label{sys2-3}
\dot{x} =     -y -  \frac{4}{9} b_{02}^2 y^3,
\qquad
\dot{y} =  x +   b_{02} y^2,
 \end{equation}
respectively.

Note that if $b_{11}=0$ in \eqref{sys2-2} and $b_{02}=0$ in \eqref{sys2-3}, then both systems are the canonic linear systems
and have a global center shown in Figure \ref{fig-theo}.A.
Thus,  we consider the cases when $b_{11} \neq 0$ and $b_{02} \neq 0$. In both cases by a linear change of coordinates we can reduce systems \eqref{sys2-2} and \eqref{sys2-3} to systems
\begin{equation} \label{sys2-2without}
\dot{x} =     -y,
\qquad
\dot{y} =  x +  x y  +  \frac{x^3}{9},
 \end{equation}
and
\begin{equation} \label{sys2-3without}
\dot{x} =     -y -  \frac{4}{9} y^3,
\qquad
\dot{y} =  x +   y^2,
 \end{equation}
respectively.

System  \eqref{sys2-2without} has only the isochronous center at $(0,0)$ as a finite singular point.
Now we analyze its singular points at infinity.
In the local chart $U_1$ system \eqref{sys2-2without} becomes
$$
\dot{u} =  \frac{1}{9} (1 + 9 u v + 9 v^2 + 9 u^2 v^2),
\qquad
\dot{v} = u v^3.
$$
This system has no real singular points.
So the unique possible infinite singular point is the origin of the local chart $U_2$.
In the local chart $U_2$ system \eqref{sys2-2without} becomes
\begin{equation} \label{sys2-2-U2}
\begin{aligned}
\dot{u} =  \frac{1}{9} (- u^4 - 9 u^2 v - 9 v^2 - 9 u^2 v^2),
\qquad
\dot{v} = - \frac{1}{9} u v (u^2 + 9 v + 9 v^2).
\end{aligned}
\end{equation}
It is  clear that $(0,0)$ is a singular point of \eqref{sys2-2-U2} and the linear part of \eqref{sys2-2-U2} at $(0,0)$
is the null matrix, i.e, $ \left(
\begin{array}{cc}
0 & 0 \\
  0 & 0
  \end{array}
\right). $
Applying the directional blow-up  in the $v$-axis twice we obtain that the behaviour of the orbits close to the origin of $U_2$ is as in Figure \ref{fig-1}.
Therefore,
the global phase portrait of system \eqref{sys2-2without} is topologically equivalent to the one in Figure \ref{fig-theo}.B.

\begin{figure}[htbp]
  \centering
  \begin{minipage}[b]{0.43\textwidth}
    \includegraphics[scale=.54]{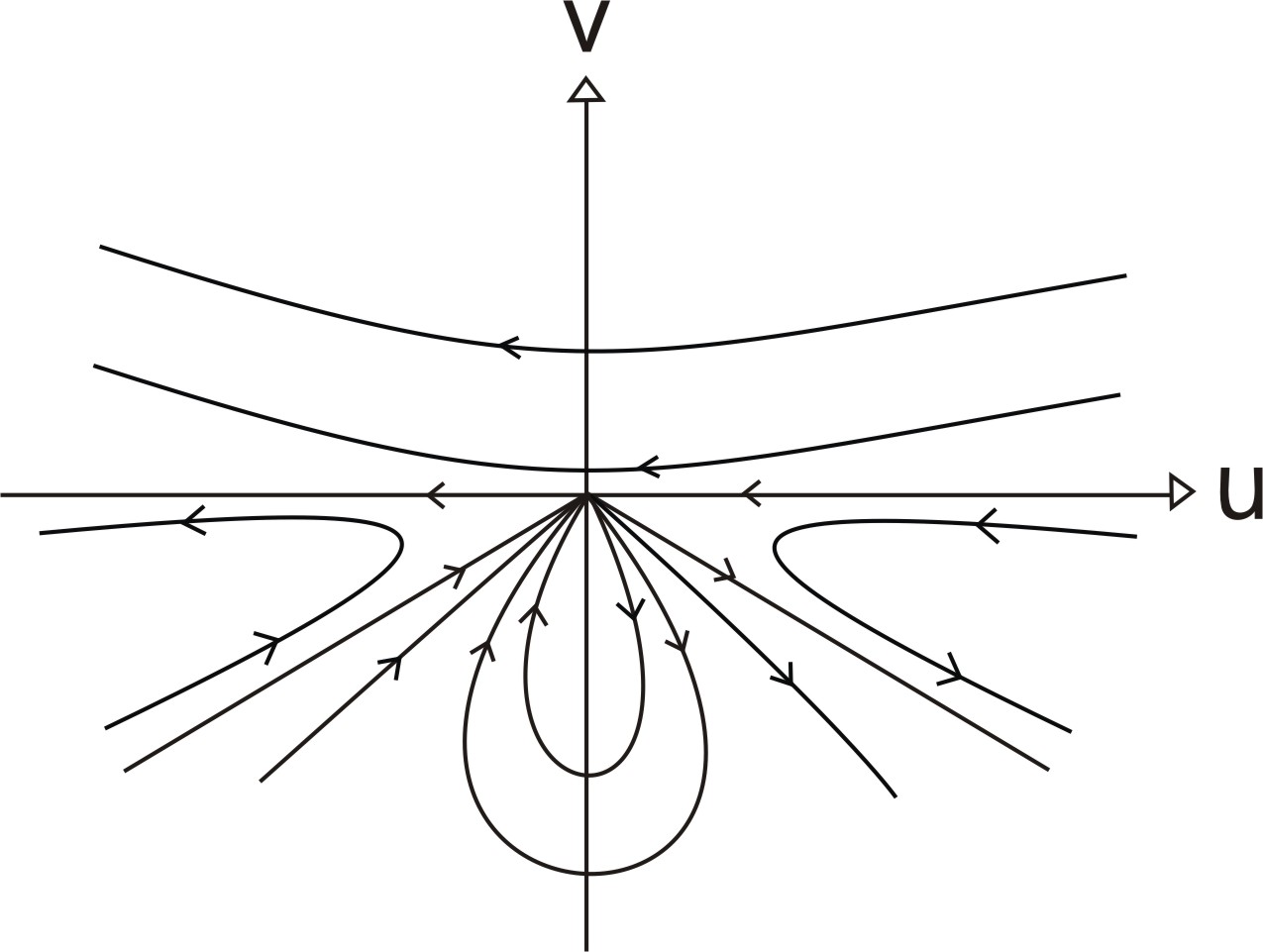}
    \caption{Behaviour of the orbits  close to the origin of system \eqref{sys2-2-U2}.} \label{fig-1}
  \end{minipage}
  \hfill
  \begin{minipage}[b]{0.43\textwidth}
    \includegraphics[scale=.54]{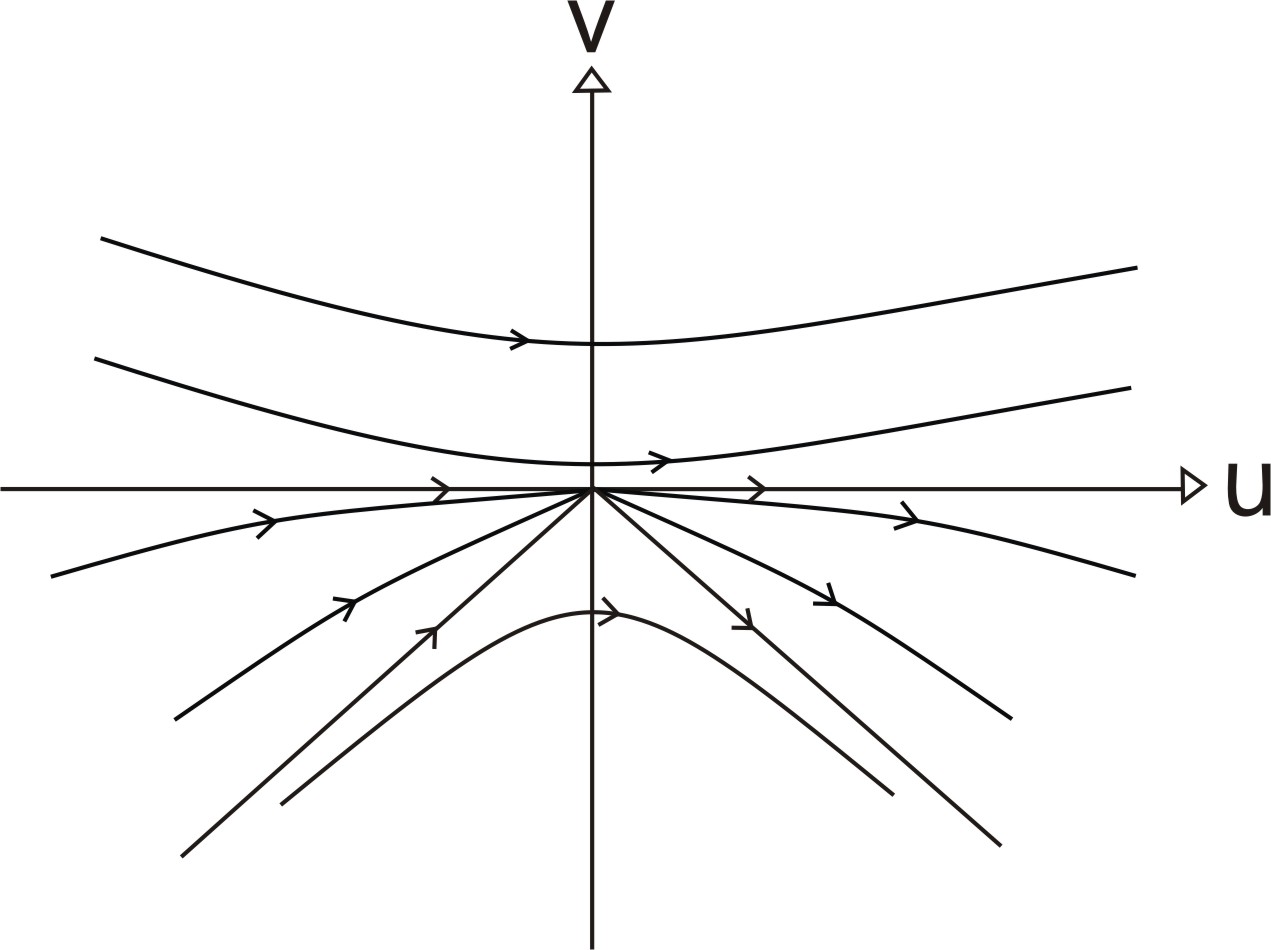}
    \caption{Behaviour of the orbits  close to the origin of system \eqref{sys2-3-U1}.} \label{fig-2}
  \end{minipage}
\end{figure}

Now we study system  \eqref{sys2-3without}.
This system has only the isochronous center at $(0,0)$ as a finite singular point.
For the infinite singular points,
in the local chart $U_1$ system \eqref{sys2-3without} becomes
\begin{equation} \label{sys2-3-U1}
\begin{aligned}
\dot{u} =  \frac{1}{9} (4 u^4 + 9 u^2 v + 9 v^2 + 9 u^2 v^2),
\qquad
\dot{v} = \frac{1}{9} u v (4 u^2 + 9 v^2) .
\end{aligned}
\end{equation}
This system has only $(0,0)$ as a singular point, and the linear part of \eqref{sys2-3-U1} at $(0,0)$ is the null matrix.
Applying  the directional blow-up  in the $v$-axis twice we obtain that the behaviour of the orbits close to the origin of $U_1$ is as showing in Figure \ref{fig-2}.

In the local chart $U_2$ system \eqref{sys2-3without} becomes
\begin{equation} \label{sys2-3-U2}
\begin{aligned}
\dot{u} =  \frac{1}{9} (-4 - 9 u v - 9 v^2 - 9 u^2 v^2),
\qquad
\dot{v} = - v^2 (1 + u v).
\end{aligned}
\end{equation}
As it is mentioned above, we need to study only the origin of this chart, but $(0,0$) is not a singular point for system  \eqref{sys2-3-U2}.
Thus, 
 the global phase portrait of system \eqref{sys2-3without} is topologically equivalent to the portrait in Figure \ref{fig-theo}.C.
\end{proof}


\bigskip

\section{Weak center and local bifurcation of critical periods}

\medskip


Let $\alpha=(a_{20}, a_{11}, ..., b_{20}, b_{11},...)$ be the string of parameters of real system \eqref{System general real} with a center at the origin.
Changing the system to the polar coordinates $x = r\cos\theta$, $y = r \sin\theta$ and eliminating $t$, we obtain
\begin{equation} \label{dr-theta}
 \frac{dr}{d\theta}=r \frac{x \dot{x}+y \dot{y}}{x \dot{y}-y \dot{x}}=\frac{r H(r, \theta,\alpha) }{1+G(r, \theta,\alpha) },
\end{equation}
where $H(r, \theta,\alpha)$ and $G(r, \theta,\alpha)$ are polynomials of $r, \alpha,\cos\theta$ and $\sin\theta$.
The solution $r= r(\theta, \alpha)$ of equation \eqref{dr-theta} satisfying the initial condition  $r(0, \alpha) = r_0 > 0$ may
be locally represented as a convergent power series in $r_0$,
\begin{equation} \label{r-r0}
 r(\theta, \alpha) =\displaystyle\sum_{k=1}^{\infty} ~v_k(\theta, \alpha) r_0^k.
\end{equation}
Substituting \eqref{r-r0} into   \eqref{dr-theta}, one can find coefficients $v_k(\theta, \alpha)$ $(k> 1)$ by successive
integration.

Assuming that  $\Gamma_{r_0}$ is the closed trajectory through $(r_0, 0)$, we can compute the period function  as
\begin{equation*}
\begin{aligned}
T(r_0, \alpha)&=\oint_{\Gamma_{r_0}} ~dt=\int_0^{2\pi} ~\frac{d\theta}{1+G(r, \theta,\alpha)}
= \displaystyle\sum_{k=0}^{\infty} ~p_{k}(\alpha) r_0^k.
\end{aligned}
\end{equation*}


The period function is even  and has the Taylor series expansion	
\begin{equation} \label{P-func}
\begin{aligned}
T(r_0, \alpha)=2\pi + \displaystyle\sum_{k=1}^{\infty} ~p_{2k}(\alpha) r_0^{2k},
\end{aligned}
\end{equation}
where $r_0<\delta$ and coefficients $p_{2k}$'s  are polynomials in parameters of system \eqref{System general real}
(see e.g. \cite{AmLS1982, CJ1989, M-R-T, R-S}).

%

 If $ p_2=... =p_{2k}=0$ and $p_{2k+2}\ne 0$, then the origin of system \eqref{System general real} is a {\it weak
 center} of  order $k$. If $p_{2k}=0$ for each $k\ge 1$, then the origin  is an {\it isochronous center}.
For a center which is not isochronous, a {\it local critical period} is any value $\tilde{r}_0<\delta$ for which
$T'(\tilde{r}_0)=0$.



By classical results of local critical period bifurcations \cite{CJ1989}, at most $k$ local critical periods
can bifurcate from the period function related to a weak center of order $k$. In order to prove that there are perturbations with exactly
$k$ local critical periods, we remind  Theorem 2    of \cite{YuH2009} as follows.


\begin{theorem}
Assume that the period constants $p_{2j}$ ($j=1,2,...,k$) of system \eqref{System general real} depend
on $k$ independent parameters $a_1, a_2, ..., a_k$. Suppose that there exists $\tilde{a}= (\tilde{a}_1, \tilde{a}_2, ..., \tilde{a}_k)$ such
that
\begin{equation*}
\begin{aligned}
& p_{2j}(\tilde{a})  =p_{2j}(\tilde{a}_1, \tilde{a}_2, ..., \tilde{a}_k)=0, ~~j=1,2,...,k,
\\
& p_{2k+2}(\tilde{a})\ne 0
\end{aligned}
\end{equation*}
 and
 $$
\det\Big( \frac{\partial ( p_2, p_4,...,p_{2k}) }{\partial (a_1, a_2, ... , a_k) } (\tilde{a})  \Big)\ne 0,
$$
then $k$  critical periods bifurcate from the center at the origin of system \eqref{System general real} after small appropriate  perturbations.
\label{th-crit-bif}
\end{theorem}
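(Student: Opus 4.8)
The plan is to translate the existence of critical periods into a root–counting problem for an auxiliary analytic function and then to exploit the Jacobian hypothesis to freely prescribe the leading Taylor coefficients of that function. Differentiating the period function \eqref{P-func} gives
\begin{equation*}
T'(r_0,\alpha)=\sum_{j\ge1}2j\,p_{2j}(\alpha)\,r_0^{2j-1}=2r_0\,g(r_0^2,\alpha),\qquad g(u,\alpha):=\sum_{j\ge1}j\,p_{2j}(\alpha)\,u^{\,j-1}.
\end{equation*}
For $r_0>0$ a local critical period is exactly a positive zero of $u\mapsto g(u,\alpha)$ with $u=r_0^2$. Restricting $\alpha$ to the $k$-parameter slice $a=(a_1,\dots,a_k)$ and writing $\tilde a$ for the distinguished point, the hypotheses $p_2(\tilde a)=\cdots=p_{2k}(\tilde a)=0$ and $p_{2k+2}(\tilde a)\neq0$ say precisely that $g(u,\tilde a)=(k+1)p_{2k+2}(\tilde a)\,u^{k}+O(u^{k+1})$, i.e. $g(\cdot,\tilde a)$ has a zero of multiplicity exactly $k$ at $u=0$. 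The goal is to split this degenerate zero into $k$ distinct simple positive zeros by an arbitrarily small change of $a$.

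Next I would apply the Weierstrass preparation theorem to $g(u,a)$ at $(u,a)=(0,\tilde a)$. Since the lowest-order term in $u$ is $u^k$ with a nonvanishing coefficient, there is a factorization $g(u,a)=W(u,a)\,U(u,a)$ valid for $(u,a)$ near $(0,\tilde a)$, where $U$ is an analytic unit with $U(0,\tilde a)=(k+1)p_{2k+2}(\tilde a)\neq0$ and $W$ is a Weierstrass polynomial
\begin{equation*}
W(u,a)=u^{k}+c_{k-1}(a)\,u^{k-1}+\cdots+c_1(a)\,u+c_0(a),
\end{equation*}
with $c_i$ analytic and $c_i(\tilde a)=0$. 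Because $U$ is nonzero near the origin, the positive zeros of $g(\cdot,a)$ close to $0$ coincide with multiplicity with those of $W(\cdot,a)$, so the problem is reduced to producing $k$ simple positive roots of $W$.

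The crucial step is to show that the coefficient map $a\mapsto\big(c_0(a),\dots,c_{k-1}(a)\big)$ is a local diffeomorphism at $\tilde a$. Comparing the Taylor coefficients of degrees $0,\dots,k-1$ in $u$ on both sides of $g=W\,U$ and linearizing at $\tilde a$ (where $W(u,\tilde a)=u^{k}$ contributes nothing below degree $k$ and all $c_i(\tilde a)=0$) yields a lower–triangular linear relation between the increments of $\big(c_0,\dots,c_{k-1}\big)$ and those of $\big(p_2,2p_4,\dots,kp_{2k}\big)$ whose diagonal entries all equal $U(0,\tilde a)\neq0$; hence it is invertible. Composing with the hypothesis $\det\big(\partial(p_2,\dots,p_{2k})/\partial(a_1,\dots,a_k)\big)(\tilde a)\neq0$ shows that $a\mapsto(c_0,\dots,c_{k-1})$ is invertible near $\tilde a$. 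Consequently I may prescribe $(c_0,\dots,c_{k-1})$ to be any small vector; choosing them as the coefficients of $(u-s_1)(u-s_2)\cdots(u-s_k)$ for $k$ distinct, sufficiently small positive numbers $0<s_1<\cdots<s_k$ forces the $c_i$ to be small (they are $\pm$ elementary symmetric functions of the $s_i$) and makes $W$ have exactly the $k$ simple positive roots $s_1,\dots,s_k$. Since these roots are simple, they persist and stay positive for the full function $g$, so $T'(r_0,\alpha)$ acquires exactly $k$ positive zeros, giving $k$ local critical periods.

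The main obstacle is the controllability statement of the third paragraph: one must guarantee that the $k$ accessible parameters genuinely move the first $k$ coefficients of $g$ (equivalently of $W$) independently, which is exactly what the nonvanishing Jacobian encodes. Once this independence is established the remainder is elementary — the realization of $k$ simple positive roots is a finite-dimensional interpolation, and the simplicity of the roots makes the count robust under the neglected higher-order terms and under passing from $W$ back to $g$, e.g. by Rouch\'e's theorem on small circles around each $s_i$.
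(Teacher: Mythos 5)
Your argument is correct, but it takes a genuinely different route from the paper's. The paper in fact does not prove Theorem \ref{th-crit-bif} itself: the statement is quoted from \cite{YuH2009}, and the remark following it only sketches the classical Chicone--Jacobs/Bautin-style argument --- use the nonsingular Jacobian together with the Implicit Function Theorem to realize any prescribed small values of $(p_2,\dots,p_{2k})$, then pick a nearby parameter $a^*$ with alternating signs and $|p_2(a^*)|\ll|p_4(a^*)|\ll\cdots\ll|p_{2k}(a^*)|\ll|p_{2k+2}(a^*)|$, so that $T'(r_0,a^*)$ changes sign $k$ times on a small interval, the sharpness of the bound $k$ being handled separately by Rolle's theorem \cite{Bau} or the Mean Value Theorem \cite{Han}. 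You instead write $T'(r_0,a)=2r_0\,g(r_0^2,a)$, apply the Weierstrass preparation theorem to $g$ at $(0,\tilde a)$ (legitimate, since $p_{2k+2}(\tilde a)\neq 0$ makes $u=0$ a zero of order exactly $k$), and use the lower-triangular relation between the increments of $(p_2,2p_4,\dots,kp_{2k})$ and those of the Weierstrass coefficients $(c_0,\dots,c_{k-1})$, with diagonal entries $U(0,\tilde a)=(k+1)p_{2k+2}(\tilde a)\neq 0$, to conclude that $a\mapsto(c_0,\dots,c_{k-1})$ is a local diffeomorphism; the critical periods are then placed at arbitrary prescribed locations $\sqrt{s_1},\dots,\sqrt{s_k}$. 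Your route buys more: inside the preparation neighborhood the zeros of $g$ coincide with multiplicity with the roots of the monic degree-$k$ polynomial $W$, so the lower bound (exactly $k$ simple positive zeros for your choice of $a$) and the upper bound (at most $k$ for every nearby $a$) come in one stroke, whereas the classical route gives ``at least $k$'' by the Intermediate Value Theorem and needs a separate Rolle-type induction for the upper bound. The price is the real-analytic Weierstrass preparation machinery and the (unstated but standard) fact that $T$, hence $g$, is jointly analytic in $(r_0,a)$; note that both arguments tacitly assume the $k$-parameter family lies in the center variety, so that the period constants in \eqref{P-func} are defined at all. One cosmetic point: your closing appeal to Rouch\'e's theorem is superfluous, since the identification of the zeros of $g$ with those of $W$ is exact rather than perturbative.
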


{\it Remark}. The proof that $k$ critical periods can bifurcate after perturbations
of system \eqref{System general real} corresponding
to parameters $\tilde{a}$ is derived using the Implicit Function Theorem,
and the proof that  the bound   $k$ is sharp   can be  derived either  using
the Mean Values Theorem \cite{Han} or  Rolle's Theorem \cite{Bau}.
In practice   $k$ critical periods
can be obtained choosing perturbations such that for some system  $a^*$ close to
$$
|p_2(a^*) |\ll | p_4(a^*)|\ll \dots \ll |p_{2k}(a^*)|\ll |p_{2k+2}(a^*)|$$
and the signs in the sequence $p_2(a^*),   p_4(a^*), \dots  p_{2k}(a^*),  p_{2k+2}(a^*)$ alternate (see e.g. \cite{Hanb,LHR,R-S} for more details).



\smallskip


Because bifurcations of critical periods are bifurcations from centers,  to study them for  system \eqref{sys-Ric3} we need to know
the center variety of the system. Due to computational difficulties the center variety of  system \eqref{sys-Ric3}
has been found only in the case when  $a_{03}=0$ \cite{ZRY}. So, from now on we assume that  in  system \eqref{sys-Ric3}
$a_{03}=0$ and consider the system
\begin{equation} \label{sys-a03}
\begin{aligned}
\dot{x}& =     -y + a_{02} y^2,
\\
\dot{y} &=  x + (b_{20} x^2 + b_{11} x y + b_{02} y^2) +  (b_{30} x^3 + b_{21} x^2 y +
    b_{12} x y^2).
  \end{aligned}
\end{equation}
The centers of system \eqref{sys-a03} are identified in the following theorem.

 \begin{theorem}[\cite{ZRY}]  \label{th-center}
 System \eqref{sys-a03} has a center at the origin if the 7-tuple
 of its parameters belongs to the variety of one of the following prime
ideals:
 \begin{enumerate}
\item $I_1= \langle b_{21}, b_{20}, b_{02}  \rangle$,
\item $I_2= \langle  b_{30},    b_{12},    b_{02},   b_{11} b_{20}-b_{21} \rangle$,
\item $I_3= \langle  b_{30}, b_{21}, b_{12}, -2 b_{02} b_{11}^2 + 4 b_{02}^2 b_{20} - b_{11}^2 b_{20},
 2 a_{02} b_{11} + b_{11}^2 - 4 b_{02} b_{20}, 2 a_{02} b_{02} - b_{02} b_{11} - b_{11} b_{20},
 4 a_{02}^2 - b_{11}^2 - 4 b_{20}^2 \rangle$,
\item $I_4= \langle  b_{21}, b_{11},a_{02} \rangle$,
\item $I_5= \langle  a_{02}, b_{02} b_{21} + b_{11} b_{30}, 2 b_{02} b_{12} + b_{12} b_{20} + b_{02} b_{30}, b_{02} b_{11} + b_{11} b_{20} - b_{21},
 b_{02}^2 + b_{02} b_{20} + b_{30}, b_{12} b_{20} b_{21} - 2 b_{11} b_{12} b_{30} - b_{11} b_{30}^2,
 b_{11} b_{20} b_{21} - b_{21}^2 - b_{11}^2 b_{30}, b_{12} b_{20}^2 - 4 b_{12} b_{30} - b_{02} b_{20} b_{30} - 2 b_{30}^2,
 b_{11} b_{12} b_{20} - 2 b_{12} b_{21} + b_{11} b_{20} b_{30} - b_{21} b_{30},
 -(b_{12} b_{21}^2) + b_{11}^2 b_{12} b_{30} + b_{11}^2 b_{30}^2 \rangle$,
\item $I_6= \langle  b_{21},b_{12},b_{11},b_{02} \rangle$,
\item $I_7= \langle  b_{21}, b_{12},b_{30}, 3 b_{02}+5 b_{20},5 a_{02}-b_{11}, 6 b_{11}^2+25 b_{20}^2 \rangle$.
\end{enumerate}
\end{theorem}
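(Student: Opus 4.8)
The plan is to follow the classical scheme for the center problem, in close parallel with the computation carried out for Theorem~\ref{Theorem 1}, but with the \emph{focus quantities} playing the role of the linearizability quantities. First I would complexify system~\eqref{sys-a03} by the substitution \eqref{zw} together with a rescaling of time, bringing it to the form \eqref{System general complex-1}, $\dot z=z+X(z,w)$, $\dot w=-w-Y(z,w)$. For such a system one seeks a formal first integral $\Psi(z,w)=zw+\sum_{j+k\ge 3}\psi_{jk}z^jw^k$ whose derivative along trajectories has the form $\dot\Psi=\sum_{k\ge 1}g_{k}\,(zw)^{k+1}$; the coefficients $g_k$ are polynomials in the parameters $(a_{02},b_{20},b_{11},b_{02},b_{30},b_{21},b_{12})$, and the origin of the real system \eqref{sys-a03} is a center if and only if every $g_k$ vanishes. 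These $g_k$ would be computed recursively by substituting the series for $\Psi$ into the identity $\dot\Psi=\sum_k g_k(zw)^{k+1}$ and matching coefficients, in the same spirit as the quantities $i_k,j_k$ were obtained from \eqref{diff lin}.

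Next I would compute the focus quantities $g_1,\dots,g_N$ for $N$ large enough that the ideal $B_N=\langle g_1,\dots,g_N\rangle$ stabilizes, and determine the irreducible decomposition of its variety $\vv(B_N)$ with the routine \texttt{minAssGTZ} of {\sc Singular}. As in the proof of Theorem~\ref{Theorem 1}, this is done over a field of large finite characteristic, and the resulting components are lifted to $\mathbb{Q}[a,b]$ by the rational reconstruction algorithm of \cite{WGD}; one expects to recover exactly the seven prime ideals $I_1,\dots,I_7$. To confirm that the list is complete, i.e. that $\vv(B_N)=\bigcup_{j=1}^{7}\vv(I_j)$ and that no further components surface at higher order, I would verify both inclusions with the Radical Membership Test, checking that the relevant Gr\"obner bases reduce to $\{1\}$, precisely the procedure of \cite{R-P} used above. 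This establishes that the conditions defining $I_1,\dots,I_7$ are \emph{necessary} for a center.

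The substantive part is sufficiency: showing that every parameter tuple lying in $\vv(I_j)$ actually produces a center. Here I would exhibit, for each component, an explicit center certificate of one of three standard kinds. Some components are settled by symmetry: on $\vv(I_4)=\{b_{21}=b_{11}=a_{02}=0\}$ the system is invariant under $(x,y,t)\mapsto(x,-y,-t)$, hence time-reversible, and a time-reversible system has a center. Others are Hamiltonian: on $\vv(I_6)=\{b_{21}=b_{12}=b_{11}=b_{02}=0\}$ the right-hand sides satisfy $\partial\dot x/\partial x+\partial\dot y/\partial y\equiv 0$, so the flow is divergence-free and the origin is a center. For the remaining, more elaborate components ($I_2,I_3,I_5,I_7$) I would search for a Darboux first integral or a Darboux integrating factor assembled from invariant algebraic curves, using the machinery of Darboux factors and cofactors recalled in Section~\ref{Sec:Results}, where the existence of such an object certifies the center.

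The hard part will be twofold. Computationally, verifying completeness of the component list over $\mathbb{Q}$ may be infeasible, so—exactly as happened for Theorem~\ref{Theorem 1}—one may only be able to confirm the Gr\"obner-basis identities over several fields of finite characteristic, which yields the result with high probability in the sense of \cite{EA}. Mathematically, the genuine difficulty is constructing the Darboux certificates for the intricate ideals $I_3$, $I_5$ and $I_7$: one must identify the correct invariant algebraic curves (their degrees and the exponents $\alpha_j$) so that a cofactor balance analogous to \eqref{cond 1} holds, and no fully algorithmic recipe guarantees this step. Once such first integrals or integrating factors are exhibited and checked, the seven conditions become both necessary and sufficient, completing the proof.
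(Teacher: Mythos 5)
You should first be aware that this paper does not prove Theorem~\ref{th-center} at all: the result is imported from the reference \cite{ZRY}, and the only commentary offered is the remark following the statement, which records that modular computations (as in the proof of Theorem~\ref{Theorem 1}) were used in \cite{ZRY} to determine the centers, that the list of centers may therefore be incomplete, and that this is exactly why the theorem asserts only ``if'' and not ``if and only if''. So there is no in-paper argument to compare against line by line; the fair comparison is with the strategy the paper attributes to \cite{ZRY}. Measured against that, your outline is the right reconstruction: complexify, compute the focus quantities $g_k$, decompose the variety of $\langle g_1,\dots,g_N\rangle$ with \texttt{minAssGTZ} over a large finite characteristic, lift by rational reconstruction, accept that completeness of the list can only be certified with high probability, and then prove sufficiency component by component with reversibility, Hamiltonian structure, and Darboux first integrals or integrating factors. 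In particular, you correctly reproduce the logical asymmetry of the statement: the certificates give genuine sufficiency, while the modular decomposition supports necessity only probabilistically, which is why the theorem is an ``if'' statement.

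There is, however, one concrete gap in your case analysis: the component $I_1$ never receives a certificate. You settle $I_4$ by reversibility, $I_6$ by the divergence-free (Hamiltonian) argument, and assign $I_2,I_3,I_5,I_7$ to the Darboux machinery, but $I_1=\langle b_{21},b_{20},b_{02}\rangle$ is left out entirely, and without it the sufficiency claim is unproven for that component. The fix is immediate and of the same kind as your $I_4$ argument: on $\vv(I_1)$ system \eqref{sys-a03} reduces to $\dot x=-y+a_{02}y^2$, $\dot y=x+b_{11}xy+b_{30}x^3+b_{12}xy^2$, where $\dot x$ is independent of $x$ and every term of $\dot y$ is odd in $x$, so the system is invariant under $(x,y,t)\mapsto(-x,y,-t)$, i.e.\ it is time-reversible with respect to the $y$-axis, hence has a center. (Incidentally, on $\vv(I_2)$ the equations factor as $\dot x=-y(1-a_{02}y)$, $\dot y=x(1+b_{20}x)(1+b_{11}y)$, so that component is separable and needs no elaborate Darboux search.) With $I_1$ added to the symmetry cases, your plan is a faithful and complete outline of how a theorem of this type is established.
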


{\it Remark.} Like in the proof of our Theorem \ref{Theorem 1}
modular computations were used in order to determine centers of system \eqref{sys-a03},
so it can happen that the list of centers of the  system given in Theorem \ref{th-center} is incomplete. For this reason it stands in the theorem "if" but not "if and only if".

We consider the local bifurcations of critical periods  for  system \eqref{sys-a03} when all parameters are real.
Because in $\mathbb{R}^7$ the variety of $I_7$ consists of one point which is the origin $(0,0,0,0,0,0,0)\in \mathbb{R}^7$, we only need to consider varieties of first six  ideals $I_1-I_6$.


 \begin{theorem}
 Suppose that the origin $O: (0,0)$  of system \eqref{sys-a03} is a weak center of a finite order.
 \\
(1) Then the order is at most $3$. More precisely,
the order is  at most $3$ (resp. $ 0, 0, 3, 2, 2$) when  parameters belong to the variety of the ideal
$I_1$ (resp. $I_2-I_6$).
\\
(2) Moreover, at most $3$ (resp. $ 0, 0, 3, 2, 2$)  critical periods can be bifurcated from the weak center $O$ of system
 \eqref{sys-a03} and there exists a perturbation with exactly $3$ (resp. $ 0, 0, 3, 2, 2$) critical periods bifurcated from $O$
 when  parameters belong to the variety of the ideal  $I_1$ (resp. $I_2-I_6$).
\label{Th-weak-center}
\end{theorem}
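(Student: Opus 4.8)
The plan is to read off the weak-center order on each center component from the period coefficients of \eqref{P-func}, and then to certify the sharp number of bifurcating critical periods through the Jacobian criterion of Theorem \ref{th-crit-bif}. First I would compute the coefficients $p_2,p_4,p_6,p_8$ (and $p_{10}$ as a safeguard) of the period function \eqref{P-func} of system \eqref{sys-a03} as polynomials in the seven parameters $a_{02},b_{20},b_{11},b_{02},b_{30},b_{21},b_{12}$, using the polar-coordinate scheme \eqref{dr-theta}--\eqref{P-func}. Since Theorem \ref{th-center} already decomposes the center variety into the prime components $\vv(I_1),\dots,\vv(I_6)$ (with $\vv(I_7)$ collapsing to the origin over $\R$), I would reduce each $p_{2k}$ modulo $I_i$, that is, work in the coordinate ring $\R[a,b]/I_i$, obtaining the reduced period constants $\bar p^{(i)}_{2k}$.

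Part (1) is then a finite sequence of ideal computations. The relevant isochronous sublocus is, by Theorem \ref{Theorem 1} with $a_{03}=0$, the real variety of condition $(2)$ (which lies inside $\vv(I_1)$ and collapses to the trivial linear center on the remaining components). To bound the order on $\vv(I_i)$ by $m$ I would verify the inclusion of varieties $\vv(\langle I_i,p_2,p_4,\dots,p_{2m+2}\rangle)\subseteq V_{\mathcal L}$, equivalently that each reduced constant $\bar p^{(i)}_{2k}$ with $k\le m+1$ lies in the radical of the isochronicity ideal after adjoining the lower ones; and to show that the order $m$ is actually attained I would exhibit a non-isochronous parameter point of $\vv(I_i)$ at which $\bar p^{(i)}_2=\dots=\bar p^{(i)}_{2m}=0$ but $\bar p^{(i)}_{2m+2}\ne0$. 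This should yield order $\le 3$ on $\vv(I_1)$ and $\vv(I_4)$, order $\le 2$ on $\vv(I_5)$ and $\vv(I_6)$, and order $0$ on $\vv(I_2)$ and $\vv(I_3)$, where already $\bar p^{(i)}_2$ vanishes only on the trivial center.

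For Part (2) the upper bound ``at most $m$ critical periods'' follows at once from the Chicone--Jacobs theory \cite{CJ1989} applied to a weak center of order $m$. To obtain the matching lower bound I would, on each component with $m\ge1$, select $m$ of the free parameters as local coordinates on $\vv(I_i)$ near a witness point $\tilde\alpha$ satisfying $\bar p^{(i)}_2=\dots=\bar p^{(i)}_{2m}=0$, $\bar p^{(i)}_{2m+2}\ne0$, and check that $\det\bigl(\partial(\bar p^{(i)}_2,\dots,\bar p^{(i)}_{2m})/\partial(a_1,\dots,a_m)\bigr)(\tilde\alpha)\ne0$; Theorem \ref{th-crit-bif} then produces exactly $m$ critical periods after a small perturbation within $\vv(I_i)$. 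For $\vv(I_2)$ and $\vv(I_3)$ the count $0$ is automatic, since a weak center of order $0$ has a strictly monotone period function near the origin.

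The main difficulty I anticipate is computational rather than conceptual. The constants $p_6,p_8$ are very bulky, and their reduction modulo the most intricate ideal $I_5$ (ten generators, higher codimension), together with the radical-membership tests bounding the order from above, may---exactly as in the proof of Theorem \ref{Theorem 1}---only be verifiable over fields of finite characteristic, so that completeness of the bounds holds with high probability rather than by a rational certificate. A secondary obstacle is exhibiting, on each component, an explicit witness point $\tilde\alpha$ together with $m$ genuinely independent local parameters for which the Jacobian is demonstrably nonzero; for the tightly constrained variety $\vv(I_5)$ this first requires finding a convenient rational parametrization.
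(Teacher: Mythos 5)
Your proposal is sound, and on the components $\vv(I_1)$, $\vv(I_2)$, $\vv(I_3)$ and $\vv(I_6)$ it is essentially the paper's own argument: restrict the period coefficients to each component; observe on $\vv(I_2)$ and $\vv(I_3)$ that already $p_2$ is positive definite and vanishes only at the trivial center; on $\vv(I_1)$ and $\vv(I_6)$ eliminate parameters successively, exhibit a real witness point where the next coefficient survives (the paper uses \emph{Reduce} and the numerical root $b_{11}\approx -2.405$ with $a_{02}=1/2$ on $\vv(I_1)$), check via \texttt{minAssGTZ} that the vanishing of $p_2,\dots,p_8$ on $\vv(I_1)$ reproduces exactly the isochronous condition $(2)$ of Theorem \ref{Theorem 1}, and then apply Theorem \ref{th-crit-bif} through the rank of $\partial(p_{2},p_{4},p_{6})/\partial(a_{02},b_{11},b_{12},b_{30})$ (resp.\ $\partial(p_{2},p_{4})/\partial(a_{02},b_{20},b_{30})$). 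The genuine divergence is on $\vv(I_4)$ and $\vv(I_5)$: there $a_{02}=0$, so system \eqref{sys-a03} is the reduced Kukles system, and the paper performs no computation at all --- it identifies $\vv(I_4)$ with the Kukles center type $K_{III}$ and $\vv(I_5)$ with $K_{II}$/$K_{IV}$ and quotes Theorems 3.3, 3.4 and 3.7 of Rousseau--Toni \cite{RT1997}, which already contain the sharp orders $3$ and $2$ together with the bifurcation statement. This shortcut removes precisely the obstacle you single out as your main anticipated difficulty: a direct treatment of $I_5$, whose variety is not a coordinate subspace, would require a rational parametrization and heavy reductions of $p_6$, $p_8$, possibly certifiable only modulo a prime. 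Your route is self-contained and would in principle work, but the paper's appeal to the known Kukles classification buys full rigor and brevity exactly where your computations are most at risk; conversely, your approach does not depend on correctly matching the components of Theorem \ref{th-center} to the Rousseau--Toni normal forms.
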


\begin{proof}
When the parameter  $\alpha=(a_{02}, b_{20}, b_{11}, b_{12}, b_{02}, b_{21}, b_{30})$ belongs to the variety of the ideal  $I_1$,
we found that the first four period coefficients of \eqref{P-func}  are 
\begin{eqnarray*}
p_{1,2}(\alpha)&=&  10 a_{02}^2-a_{02} b_{11}+b_{11}^2-3 b_{12}-9 b_{30},
\\
p_{1,4}(\alpha)&=&
1540 a_{02}^4+700 a_{02}^3 b_{11}+21 a_{02}^2 b_{11}^2-2 a_{02} b_{11}^3+b_{11}^4+84 a_{02}^2 b_{12}+300 a_{02}^2 b_{30}
\\
&& +6 a_{02} b_{11} b_{12}+18 a_{02} b_{11} b_{30}-6 b_{11}^2 b_{12}-66 b_{11}^2 b_{30}+9 b_{12}^2+54 b_{12} b_{30}+513 b_{30}^2,
\\
p_{1,6}(\alpha)&=&  3403400 a_{02}^6+3303300 a_{02}^5 b_{11}+690690 a_{02}^4 b_{11}^2-699 a_{02}^2 b_{11}^4+281340 a_{02}^3 b_{11} b_{30}
\\
&&-139 b_{11}^6+1261260 a_{02}^4 b_{12}+1455300 a_{02}^4 b_{30}+346500 a_{02}^3 b_{11} b_{12}+11935 a_{02}^3 b_{11}^3
\\
&&+7263 a_{02}^2 b_{11}^2 b_{12}-4995 a_{02}^2 b_{11}^2 b_{30}-3366 a_{02} b_{11}^3 b_{12}-2538 a_{02} b_{11}^3 b_{30}+417 a_{02} b_{11}^5
\\
&&+1251 b_{11}^4 b_{12}+1377 b_{11}^4 b_{30}+6966 a_{02}^2 b_{12}^2+31860 a_{02}^2 b_{12} b_{30}-52650 a_{02}^2 b_{30}^2
\\
&& +13878 a_{02} b_{11} b_{12} b_{30}-4455 a_{02} b_{11} b_{30}^2-3321 b_{11}^2 b_{12}^2 +2025 b_{12}^3+5265 b_{12}^2 b_{30}
\\
&&
-7614 b_{11}^2 b_{12} b_{30}+41391 b_{11}^2 b_{30}^2-13365 b_{12} b_{30}^2-382725 b_{30}^3+7209 a_{02} b_{11} b_{12}^2.
\end{eqnarray*}
We omit the expression of   $p_{1,8}(\alpha)$,  since it is long and the number of its terms  is   $55$.

  We compute the decomposition of $\langle p_{1,2}, p_{1,4}, p_{1,6}, p_{1,8} \rangle  $ with \texttt{minAssGTZ}
and obtain $\langle a_{02},  b_{11}^2-9 b_{30},  b_{12}  \rangle  $.
That is, the condition $p_{1,2}= p_{1,4}=p_{1,6}= p_{1,8}=0$ yields that $b_{12} = a_{02} =  b_{20} =  b_{02} =  b_{21} =  a_{03} =  9 b_{30} - b_{11}^2 =0 $,
showing that the origin is an isochronous center of system \eqref{sys-a03} in this case by Theorem \ref{Theorem 1}.

Solving the equation $p_{1,2}(\alpha)=0$ we get
\begin{eqnarray}
b_{12}= \tilde{b}_{12}:= (10/3) a_{02}^2-(1/3) a_{02} b_{11}+(1/3) b_{11}^2-3 b_{30}.
\label{b12-}
\end{eqnarray}
Substituting \eqref{b12-} in $p_{1,4}(\alpha)$, we obtain
\[
 432 b_{30}^2 +48( a_{02}^2 -b_{11}^2) b_{30} +1920 a_{02}^4+672 a_{02}^3 b_{11}+48 a_{02}^2 b_{11}^2=0.
\]
Thus, when $-1439 a_{02}^4-504 a_{02}^3 b_{11}-38 a_{02}^2 b_{11}^2+b_{11}^4<0$ the origin $O$
is a weak center of order $1$. When $-1439 a_{02}^4-504 a_{02}^3 b_{11}-38 a_{02}^2 b_{11}^2+b_{11}^4\ge 0$, from  $p_{1,4}(\alpha)=0$ we find that
 \begin{eqnarray*}
 b_{30} =  \tilde{b}_{30} :=\frac{1}{18} \Big(- a_{02}^2+  b_{11}^2+ \sqrt{-1439 a_{02}^4-504 a_{02}^3 b_{11}-38 a_{02}^2 b_{11}^2+b_{11}^4} \Big).
\end{eqnarray*}

We now employ the procedure {\it Reduce} of  computer algebra system {\sc Mathematica}   for the set of  equalities and inequalities
$\{b_{12}= \tilde{b}_{12},  b_{30} =  \tilde{b}_{30},  -1439 a_{02}^4-504 a_{02}^3 b_{11}-38 a_{02}^2 b_{11}^2+b_{11}^4\ge 0, p_{1,6}(\alpha)=0, p_{1,8}(\alpha)\ne0 \}$,
and find that this semi-algebraic system is fulfilled if and only if $a_{02}\ne0$ and
\begin{eqnarray}
&& -128966505300 a_{02}^{10} - 131928442900 a_{02}^9 b_{11} -62892021225 a_{02}^8 b_{11}^2 - 18497447700 a_{02}^7 b_{11}^3
\nonumber \\
&&~ -3614043210 a_{02}^6 b_{11}^4 - 467726370 a_{02}^5 b_{11}^5 -37088580 a_{02}^4 b_{11}^6 - 1472760 a_{02}^3 b_{11}^7
\nonumber \\
&& ~ - 19938 a_{02}^2 b_{11}^8 +1650 a_{02} b_{11}^9 + 125 b_{11}^{10} =0.
\label{I-p6}
\end{eqnarray}
Assuming that  $a_{02} = 1/2$, we can calculate one of solutions $b_{11}\approx -2.405222225$ 
from  above equation, which indicates the existence of solutions of above equation with respect to parameters
$a_{02}$ and $b_{11}$ in real field.
Moreover, computing with {\sc Mathematica}  the rank of the matrix
$$
\frac{\partial(p_{1,2},p_{1,4},p_{1,6})}{\partial(a_{02},b_{11},b_{12},b_{30})},
$$
we find that it is equal to $3$ when $b_{12}= \tilde{b}_{12},  b_{30} =  \tilde{b}_{30}$, $a_{02}\ne0$ and \eqref{I-p6} holds.
From  Theorem  \ref{th-crit-bif}
there exists a perturbation of system  \eqref{sys-a03} with exactly  $3$ critical periods bifurcated from weak center $O$
of order $3$ when  $\alpha$ belongs to the variety of  $I_1$.



\smallskip


When the parameter  $\alpha=(a_{02}, b_{20}, b_{11}, b_{12}, b_{02}, b_{21}, b_{30})$ belongs to the variety of the ideal  $I_2$,
we have the first  period coefficient  in \eqref{P-func}:
\begin{eqnarray*}
p_{2,2}(\alpha)&=&  10 a_{02}^2-a_{02} b_{11}+b_{11}^2+10 b_{20}^2,
\end{eqnarray*}
which cannot be equal to zero in the real field, since $10 a_{02}^2-a_{02} b_{11}+b_{11}^2>0$ unless $a_{02}=b_{11}=0$.
That is, the center at the origin is of order $0$ in this case.


When the parameter  $\alpha=(a_{02}, b_{20}, b_{11}, b_{12}, b_{02}, b_{21}, b_{30})$ lies in the variety of the ideal  $I_3$,
we compute the first  period coefficient  in \eqref{P-func}:
\begin{eqnarray*}
p_{3,2}(\alpha)&=&   10 a_{02}^2-a_{02} b_{11}+b_{11}^2  +4 b_{02}^2+10 b_{02} b_{20}+10 b_{20}^2,
\end{eqnarray*}
finding that $p_{3,2}(\alpha)\ne 0$  unless all parameters vanish. Thus the  center $O$ is of order $0$ in this case.


When the parameter  $\alpha=(a_{02}, b_{20}, b_{11}, b_{12}, b_{02}, b_{21}, b_{30})$ belongs to the variety of the ideal  $I_4$ or $I_5$,
we can see that system \eqref{sys-a03} is a reduced Kukles system.
The variety of ideal  $I_4$  (resp. $I_5$) for  center conditions    corresponds  to the center type $K_{III}$ (resp. $K_{II}$ or $K_{IV}$) in \cite{RT1997}.
Applying Theorems 3.3, 3.4 and 3.7 of  \cite{RT1997}  we obtain that
 the order at the  origin is at most $3$ (resp.  $2$), and there exists a perturbation with exactly  $3$ (or $ 2$) critical periods bifurcated from $O$
 when  parameters belong to the variety of the ideal $I_4$ (resp. $I_5$).


When the parameter  $\alpha=(a_{02}, b_{20}, b_{11}, b_{12}, b_{02}, b_{21}, b_{30})$ belongs to the variety of the ideal  $I_6$,
we found that the first three period coefficients in \eqref{P-func} are
\begin{eqnarray*}
p_{6,2}(\alpha)&=&   10 a_{02}^2+10 b_{20}^2-9 b_{30},
\\
p_{6,4}(\alpha)&=& 1540 a_{02}^4+200 a_{02}^2 b_{20}^2+1540 b_{20}^4+300 a_{02}^2 b_{30}-3300 b_{20}^2 b_{30}+513 b_{30}^2,
\\
p_{6,6}(\alpha)&=&  136136 a_{02}^6+38808 a_{02}^4 b_{20}^2+13080 a_{02}^2 b_{20}^4+165704 b_{20}^6+58212 a_{02}^4 b_{30}
\\
&& ~-17496 a_{02}^2 b_{20}^2 b_{30}-546588 b_{20}^4 b_{30}-2106 a_{02}^2 b_{30}^2+341334 b_{20}^2 b_{30}^2-15309 b_{30}^3.
\end{eqnarray*}
Eliminating $b_{30}$ from $p_{6,2}(\alpha)=0$ we find
\[
b_{30}=\hat{b}_{30}:=(10 a_{02}^2+10 b_{20}^2)/9.
\]
Letting $b_{30}=\hat{b}_{30}$ we obtain from  $p_{6,4}=0$ that
$$
 47 a_{02}^4-35 a_{02}^2 b_{20}^2-28 b_{20}^4=0,
$$
yielding
\[
a_{02} = \hat{a}_{02} :=\pm \sqrt{35/94+(3/94) \sqrt{721}} ~b_{20}.
\]
Eliminating  $b_{30}$ and $a_{02}$ by substituting $b_{30}=\hat{b}_{30}$ and $a_{02} = \hat{a}_{02}$ into $p_{6,6}(\alpha)$,
we obtain
\[
 b_{20}^6 (3578681+142373 \sqrt{721}),
\]
which does not vanish if $b_{20}\ne 0$.
Therefore, the order of the weak center is at most  $2$, and there exists a perturbation with exactly    $ 2$  critical periods bifurcated from $O$
 when  parameters belong to the variety of the ideal $I_6$ by Theorem  \ref{th-crit-bif},
  since  the rank of the matrix
$$
\frac{\partial(p_{6,2},p_{6,4})}{\partial(a_{02},b_{20},b_{30})},
$$
 is equal to $2$ when $b_{30}=\hat{b}_{30}$, $a_{02} = \hat{a}_{02}$ and $b_{20}\ne 0$.
Notice that when $b_{20}=0$ and $a_{02}\ne 0$  the center $O$
is a weak center of order $1$, and when $b_{20}=a_{02}=0$ the center $O$
is either the linear  isochronous center  or the order is $0$.
\end{proof}

\medskip

\section{Conclusion}

For cubic generalized Riccati system  \eqref{sys-Ric3}, we derived conditions on parameters of the system for the linearizability  of the origin,
see conditions $(1)$-$(4)$ of Theorem \ref{Theorem 1}.

For the study we have used the approach based on the modular
calculations of the set of solutions of polynomial systems,
which was used for the first time in \cite{R-C-H} and described in details
in \cite{R-P}.  The approach can be considered as  one between precise
symbolic computations and numerical computations
since it produces a result which is not completely  correct, but
correct with high probability -- in the sense that it is easily verified
if the obtained solutions of a given system of polynomials
are correct, but it can happen, that some solutions are lost.
Recently an efficient algorithm to verify if the list of solutions
obtained with the approach is complete was proposed
in  \cite{NY} however it is not yet implemented in freely
available computer algebra systems. The approach can
be efficiently applied to study  various mathematical  models
where arises the problem of solving polynomial equations.

When the origin is an isochronous center, we found that system \eqref{sys-Ric3} has at most three topologically equivalent global  structures,
which are the global center at the origin, the neighborhood of equilibrium at infinity  consists of one elliptic sector and three hyperbolic sectors, and
 the neighborhood of equilibrium at infinity  consists of two hyperbolic sectors and two parabolic sectors,  as shown in Theorem \ref{th-global}.
The last result is the investigation of local bifurcations  of critical periods in
a neighborhood of the center. We proved that the order of  weak center at the origin  is at most $3$   when  parameters belong to the center variety
and at most $3$   critical periods can be bifurcated from the weak center  of system  \eqref{sys-a03},  as shown in Theorem \ref{Th-weak-center}.


\section*{Acknowledgements}

The first author acknowledges  the financial support from the Slovenian Research Agency
(research core funding No. P1-0306).
The second author is partially supported by a CAPES grant.
The third author has received funding from the European Union's Horizon 2020 research and innovation
programme under the Marie Sklodowska-Curie grant agreement No 655212, and is  partially supported by the National Natural
Science Foundation of China (No. 11431008) and the RFDP of Higher Education of China grant (No. 20130073110074).
The first, second and third authors  are also   supported by Marie Curie International Research Staff Exchange Scheme Fellowship
within the 7th European Community Framework Programme, FP7-PEOPLE-2012-IRSES-316338.
The forth author is partially supported by  the National Natural Science Foundation of China (No. 11501370).
The first author thanks Professor Maoan Han for fruitful discussions on the work.



\section*{Appendix}

Here are listed the first two pairs of the linearizability quantities of system \eqref{sys-Ric3}.

{\footnotesize
\begin{equation*}
\begin{aligned}
i_1 = &  10 a_{02}^2 + 9 a_{03} + 4 b_{02}^2 - a_{02} b_{11} + b_{11}^2 - 3 b_{12} + 10 b_{02} b_{20} +
     10 b_{20}^2 - 9 b_{30},
   \\
   j_1 = & 2 a_{02} b_{02} - b_{02} b_{11} - b_{11} b_{20} + b_{21},
\\
i_2 = & 168 a_{02}^2 a_{03} - 272 a_{02}^2 b_{02}^2 - 72 a_{03} b_{02}^2 - 32 b_{02}^4 -
     112 a_{02}^3 b_{11} - 42 a_{02} a_{03} b_{11} + 40 a_{02} b_{02}^2 b_{11} -  21 a_{03} b_{11}^2
     \\
&      - 18 b_{02}^2 b_{11}^2 - 21 a_{02} b_{11}^3 - b_{11}^4 + 12 a_{02}^2 b_{12} -
     48 b_{02}^2 b_{12} + 72 a_{02} b_{11} b_{12} - 3 b_{11}^2 b_{12} + 18 b_{12}^2 -
     48 a_{02}^2 b_{02} b_{20}
     \\
     & - 132 a_{03} b_{02} b_{20} - 80 b_{02}^3 b_{20} -
     286 a_{02} b_{02} b_{11} b_{20} + 47 b_{02} b_{11}^2 b_{20} - 144 b_{02} b_{12} b_{20} -
     160 a_{02}^2 b_{20}^2 - 102 a_{03} b_{20}^2
     \\
     & + 12 b_{02}^2 b_{20}^2 - 306 a_{02} b_{11} b_{20}^2 +
     61 b_{11}^2 b_{20}^2 - 114 b_{12} b_{20}^2 + 260 b_{02} b_{20}^3 + 200 b_{20}^4 -
     30 a_{02} b_{02} b_{21} - 39 b_{02} b_{11} b_{21}
     \\
     & + 84 a_{02} b_{20} b_{21} - 96 b_{11} b_{20} b_{21} +
     27 b_{21}^2 + 132 a_{02}^2 b_{30} + 81 a_{03} b_{30} - 66 b_{02}^2 b_{30} + 207 a_{02} b_{11} b_{30} -
     6 b_{11}^2 b_{30}
     \\
     & - 6 a_{02}^2 b_{11}^2 + 81 b_{12} b_{30} - 498 b_{02} b_{20} b_{30} - 498 b_{20}^2 b_{30} + 135 b_{30}^2,
\\
j_2 = & 224 a_{02}^3 b_{02} + 240 a_{02} a_{03} b_{02} - 16 a_{02} b_{02}^3 - 184 a_{02}^2 b_{02} b_{11} +
     6 a_{03} b_{02} b_{11} + 40 b_{02}^3 b_{11} + 124 a_{02} b_{02} b_{11}^2 - 11 b_{02} b_{11}^3
     \\
     &-     156 a_{02} b_{02} b_{12} + 54 b_{02} b_{11} b_{12} - 120 a_{02}^3 b_{20} - 108 a_{02} a_{03} b_{20} +
     104 a_{02} b_{02}^2 b_{20} + 40 a_{02}^2 b_{11} b_{20} + 27 a_{03} b_{11} b_{20}
     \\
     &+     38 b_{02}^2 b_{11} b_{20} + 77 a_{02} b_{11}^2 b_{20} - 8 b_{11}^3 b_{20} + 24 a_{02} b_{12} b_{20} +
     39 b_{11} b_{12} b_{20} + 140 a_{02} b_{02} b_{20}^2 - 64 b_{02} b_{11} b_{20}^2
     \\
     & - 120 a_{02} b_{20}^3 - 50 b_{11} b_{20}^3 - 48 a_{02}^2 b_{21} - 45 a_{03} b_{21} - 42 b_{02}^2 b_{21} -
     87 a_{02} b_{11} b_{21} - 27 b_{12} b_{21} - 6 b_{02} b_{20} b_{21}
     \\
     & + 6 b_{11}^2 b_{21}+ 30 b_{20}^2 b_{21} - 270 a_{02} b_{02} b_{30} + 105 b_{02} b_{11} b_{30} + 108 a_{02} b_{20} b_{30} +
     84 b_{11} b_{20} b_{30} - 36 b_{21} b_{30}.
\end{aligned}
\end{equation*}
}

\end{document}